\providecommand{\N}{\mathbb{N}}
\providecommand{\F}{\mathbb{F}}
\providecommand{\Z}{\mathbb{Z}}
\providecommand{\leqst}{\leq_{\mathrm{st}}} 
\providecommand{\Hom}{\mathrm{Hom}}
\newcommand{\paren}[1]{\left( #1 \right)}
\newcommand{\brac}[1]{\left[ #1 \right]}
\newcommand{\abs}[1]{\left\vert#1\right\vert}
\newcommand{\set}[1]{\left\{#1\right\}}
\DeclareMathOperator{\im}{im}
\newtheorem{Theorem}{Theorem}
\newtheorem{Lemma}[Theorem]{Lemma}
\newtheorem{Definition}[Theorem]{Definition}
\newtheorem{Proposition}[Theorem]{Proposition}
\newtheorem{Corollary}[Theorem]{Corollary}
\newcounter{cnstcnt}
\begin{document}

\title[Some Properties of the PRCM]{Some Properties of the Plaquette Random-Cluster Model}
\author{Paul Duncan}
\email{paul.duncan@mail.huji.ac.il}
\address{Einstein Institute of Mathematics, Hebrew University of Jerusalem, Jerusalem 91904, Israel}
\author{Benjamin Schweinhart}
\email{bschwei@gmu.edu}
\address{Department of Mathematical Sciences, George Mason University, Fairfax, VA 22030, USA}

\begin{abstract}
We show that the $i$-dimensional plaquette random-cluster model with coefficients in $\Z_q$ is dual to a $(d-i)$-dimensional plaquette random cluster model. In addition, we explore boundary conditions, infinite volume limits, and uniqueness for these models. For previously known results, we provide new proofs that rely more on the tools of algebraic topology. 
\end{abstract}
\maketitle

\section{Introduction}

The plaquette random-cluster model (PRCM) with coefficients in $\Z_q$ for $q \in \N$ is the random $i$-dimensional subcomplex $P$ of a cubical complex $X$ so that 
$$\mu_{X,p,q,i}\paren{P} \propto p^{\abs{P}}\paren{1-p}^{\abs{X^{\paren{i}}} - \abs{P}}\abs{H^{i-1}\paren{P;\;\Z_q}}$$
where $\abs{P}$ and $\abs{X^{\paren{i}}}$ denote the number of $i$-plaquettes of $P$ and $X,$ respectively, and $H^{i-1}\paren{P;\;\Z_q}$ is the cohomology of $P$ with coefficients in $\Z_q.$ This definition was first suggested in~\cite{duncan2022topological} (rather, an equivalent one defined in terms of homology; see Corollary~\ref{cor:UCTC} below), and the details were worked out independently by~\cite{duncan2023sharp} and \cite{shklarov2023edwards}. The PRCM is motivated by its coupling with $(i-1)$-dimensional $q$-state Potts lattice gauge theory which assigns spins to the $(i-1)$ cells of $X$; the Wilson loop expectation for an $(i-1)$-boundary $\gamma$ equals the probability that $\brac{\gamma}$ is null-homologous when homology coefficients are taken in $\Z_q.$ When $q$ is prime this PRCM coincides with the plaquette random-cluster model with coefficients in the field $\F_q.$ The latter was first introduced by~\cite{hiraoka2016tutte}, which focused on a mean-field version of the model. All results presented here also hold for the PRCM with coefficients in a field, with minor modifications.

Graphical representations have proven to be a useful tool in the study of lattice spin models such as the Potts model. For the PRCM --- a cellular representation of Potts lattice gauge theory --- to play the same role, its basic properties must be elucidated. The methods of~\cite{duncan2023sharp} relied on a technical shortcut to obtain basic results about the codimension one PRCM on the way to prove a sharp phase transition for Wilson loop expectations in $(d-2)$-dimensional Potts lattice gauge theory on $\Z^d.$ Specifically, $(d-1)$-dimensional PRCM on $\Z^d$ with coefficients in an abelian group is equivalent to a PRCM with coefficients in a field, which is in turn dual to the the classical $1$-dimensional random-cluster model (RCM). Thus, results for the RCM concerning boundary conditions, positivity, and infinite volume limits can be translated to corresponding statements for the codimension one PRCM. We cannot rely on this logic more generally.  The purpose of this paper is to prove corresponding results for general values of $i,$ including the special case of the self-dual $2$-dimensional PRCM on $\Z^4$ which is coupled with $1$-dimensional Potts lattice gauge theory. We hope that this will be helpful for researchers tackling this particularly interesting case.

One of our main goals is to show that an $i$-dimensional PRCM on $\Z^d$ on $\Z_q$ with parameter $p$ is dual to a $(d-i)$-dimensional PRCM on $\Z^d$ with coefficients in $\Z_q$ and parameter $p^*\paren{p,q}$ where
$$  p^*(p,q)= \frac{\paren{1-p}q}{\paren{1-p}q + p}\,.$$
Towards that end, we study boundary conditions and infinite volume measures for the PRCM and prove a number of results about them. Some of these latter results were also shown by~\cite{shklarov2023edwards}, but our proofs are shorter and employ different, more geometric arguments.

Before proceeding, we give a definition of the PRCM on a box with boundary conditions. We will explain how this generalizes the standard construction for the RCM and provide more intuition in Section~\ref{sec:infvolume0}. Fix $i,$ let $r$ be a rectangular box in $\Z^d$ and let $\xi$ be a collection of $i$-plaquettes. Denote by $P_{\xi}$ the union of $P, \xi \cap \paren{\Z^d \setminus r},$ and the $(i-1)$-skeleton of $\Z^d,$ and write $\phi$ for the inclusion map from $P$ into $P_{\xi}.$ Then the PRCM on $r$ with boundary conditions $\xi$ is the measure $\mu_{r,p,q,i}^{\xi}$ is defined by
 \[\mu_{r,p,q,i}^{\xi}\paren{P} \propto p^{\abs{P}}\paren{1-p}^{\abs{X^{\paren{i}}} - \abs{P}}\abs{\im\phi^*}\,.\]
While this does not include the important case of periodic boundary conditions, the corresponding results about that case follow by very similar arguments (see~\cite{duncan2022topological}). 

We give an informal description of our results. First, we show that if the PRCMs on a sequence of nested boxes $r_1\subset r_2\subset r_3\ldots$ constructed with boundary conditions $\xi$ converges to an infinite volume measure, then the correctly chosen dual PRCMs converge to a dual infinite volume measure. There is some subtlety in this; the dual PRCM is ``wired at infinity'' in a sense made precise using Borel--Moore homology. The duality theorem is proven in two steps: by establishing a finite volume analogue in Theorem~\ref{thm:mostgeneralduality} and then extending it to infinite volume measures in Theorem~\ref{thm:limduality}. We also establish a number of technical results about the PRCM with boundary conditions.  Proposition~\ref{prop:largeboundary} states that for sufficiently large boxes $\hat{r}\supset r,$ 
this measure coincides with the free PRCM $\hat{P}$ on $\hat{r}$ conditioned to agree with $\xi$ on $\hat{r} \setminus r.$

Next, we extend a theorem of Grimmett on the classical RCM~\cite{grimmett1995stochastic} to show that there is a unique infinite volume PRCM for generic values of $p$ and fixed values of $i,d,$ and $q$ (Theorem~\ref{thm:uniqueness}). Finally, we show that finite volume (Theorem~\ref{thm:FKGgeneral}) infinite volume PRCMs (Corollary~\ref{cor:FKGinfinite}) are positively associated using the Mayer--Vietoris sequence for homology.

\section{Background and Definitions}
In this paper we consider  random subcomplexes of the natural cubical complex structure on the integer lattice $\Z^d.$ The $k$-dimensional cells of this complex are exactly the translates and rotations of the unit cube $\brac{0,1}^k$ which have integer corner points. For a subcomplex $X,$ we write $X^{\paren{k}}$ for the union of the cells of $X$ of dimension at most $k.$ If the highest dimensional cell of $X$ is $k,$ then write $\abs{X}$ for the number of $k$-cells of $X.$ We say that $X$ is an $i$-dimensional percolation subcomplex of $Y \subset \Z^d$ if $Y^{\paren{i-1}} \subset X \subset Y^{\paren{i}}.$ In particular, any subset of the $i$-dimensional cells is permitted. We most often consider rectangular subsets of $\Z^d$ of the form $r = \prod_{k=1}^d\brac{a_k,b_k},$ which we call boxes. We will also write $r$ for the union of its cells of dimension at most $(i-1)$ and the $i$-cells which intersect its interior (alternatively, we exclude the $i$-cells contained in its boundary). For the union of all cells of dimension at most $i$ in $r,$ we instead write $\overline{r}.$ Sometimes it will also be convenient to work with the cube of side length $2n,$ which we write as $\Lambda_n \coloneqq \brac{-n,n}^d.$ In this paper, we assume familiarity with homology and cohomology. For a non-specialist introduction, see the first appendix to~\cite{duncan2023sharp}. 

The $i$-dimensional plaquette random-cluster model (PRCM) of a finite subcomplex $X \subset \Z^d$ with parameters $p \in \brac{0,1},$ $q \in \N+2$ is defined as follows:

\begin{Definition}\label{def:prcm}
    Let $\mu_{X,p,q,i}$ be the measure on percolation subcomplexes $P \subset X$ given by
    \[\mu_{X,p,q,i} \propto p^{\abs{P}}\paren{1-p}^{\abs{X^{\paren{i}}} - \abs{P}}\abs{H^{i-1}\paren{P;\;\Z_q}}\,.\]
\end{Definition}

The PRCM can equivalently be defined in terms of homology rather than cohomology as a consequence of the universal coefficient theorem. We recall a formulation given in~\cite{duncan2023sharp}.

\begin{Proposition}\label{prop:UCTC}
If $H_{j-2}\paren{P;\;\Z_q}$ vanishes (or, more generally, is a free $\Z_q$-module) then
\begin{equation*}
\label{eq:UCTC}
H^{j-1}\paren{P;\;\Z_q}\cong H_{j-1}\paren{P;\;\Z_q}\,.
\end{equation*}
\end{Proposition}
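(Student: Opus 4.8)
The plan is to deduce the isomorphism from the universal coefficient theorem, after first recasting both sides as invariants over the ring $\Z_q$ so that the hypothesis (which concerns $\Z_q$-homology) can be brought to bear. First I would pass from $\Z$-coefficients to $\Z_q$-coefficients at the chain level. Since $P$ is a finite complex, the integral chain groups $C_k\paren{P;\,\Z}$ are finitely generated free $\Z$-modules, so the tensor--hom adjunction gives a natural isomorphism of cochain complexes $\Hom_{\Z}\paren{C_\bullet(P;\Z),\,\Z_q}\cong\Hom_{\Z_q}\paren{C_\bullet(P;\Z)\otimes_{\Z}\Z_q,\,\Z_q}$. The right-hand complex is the $\Z_q$-linear dual of the complex $C_\bullet(P;\Z)\otimes_{\Z}\Z_q$ that computes $H_\bullet\paren{P;\,\Z_q}$. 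Thus $H^{j-1}\paren{P;\,\Z_q}$ is the $(j-1)$-st cohomology of the $\Z_q$-dual of the complex computing $\Z_q$-homology, and the statement becomes a purely algebraic one about a chain complex of finitely generated free $\Z_q$-modules.

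Next I would invoke the universal coefficient theorem over $\Z_q$, in the form of the natural short exact sequence
\[0\to\mathrm{Ext}^1_{\Z_q}\!\paren{H_{j-2}(P;\Z_q),\,\Z_q}\to H^{j-1}\paren{P;\,\Z_q}\to\Hom_{\Z_q}\!\paren{H_{j-1}(P;\Z_q),\,\Z_q}\to0\,.\]
Under the hypothesis that $H_{j-2}\paren{P;\,\Z_q}$ is free (in particular when it vanishes), the $\mathrm{Ext}$ term is zero, so the evaluation map identifies $H^{j-1}\paren{P;\,\Z_q}$ with the dual module $\Hom_{\Z_q}\paren{H_{j-1}(P;\Z_q),\,\Z_q}$. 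To finish I would use the self-duality of finitely generated $\Z_q$-modules: writing such a module as $\bigoplus_\ell\Z_{d_\ell}$ with each $d_\ell\mid q$ and computing $\Hom_{\Z_q}\paren{\Z_{d_\ell},\,\Z_q}\cong\Z_{d_\ell}$ term by term shows $\Hom_{\Z_q}\paren{M,\,\Z_q}\cong M$, which yields $H^{j-1}\paren{P;\,\Z_q}\cong H_{j-1}\paren{P;\,\Z_q}$.

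The main obstacle is that $\Z_q$ is not a principal ideal domain, so the textbook splitting that drives the usual universal coefficient theorem --- every submodule of a free module is free --- is unavailable, and the exact sequence above needs separate justification. The structural fact I would exploit is that $\Z_q$ is a self-injective (Frobenius) ring, so $\Hom_{\Z_q}\paren{-,\,\Z_q}$ is an exact contravariant functor; this both legitimizes the universal coefficient sequence over $\Z_q$ and, applied directly, shows that $\Hom_{\Z_q}\paren{-,\,\Z_q}$ commutes with the passage to homology. I would remark in passing that this last observation in fact proves the isomorphism even without the freeness assumption on $H_{j-2}$, so the stated hypothesis, while convenient for producing a clean natural map, is not strictly necessary.
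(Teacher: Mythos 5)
Your argument is correct, but note for calibration that the paper itself gives no proof of this proposition --- it is recalled from \cite{duncan2023sharp} --- so the implicit benchmark is the standard derivation from the two integral universal coefficient theorems, which splits $H^{j-1}\paren{P;\;\Z_q}$ as $\Hom\paren{H_{j-1}\paren{P;\;\Z},\Z_q}\oplus\mathrm{Ext}^1_{\Z}\paren{H_{j-2}\paren{P;\;\Z},\Z_q}$ and $H_{j-1}\paren{P;\;\Z_q}$ as $\paren{H_{j-1}\paren{P;\;\Z}\otimes\Z_q}\oplus\mathrm{Tor}_1^{\Z}\paren{H_{j-2}\paren{P;\;\Z},\Z_q}$ and matches summands via $\Hom\paren{\Z_m,\Z_q}\cong\Z_m\otimes\Z_q\cong\mathrm{Ext}^1_{\Z}\paren{\Z_m,\Z_q}\cong\mathrm{Tor}_1^{\Z}\paren{\Z_m,\Z_q}\cong\Z_{\gcd\paren{m,q}}$. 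Your route instead stays entirely over the ring $\Z_q$: the base change of the cochain complex by tensor--hom adjunction is valid (and in fact needs no freeness of the chain groups), and your key structural input --- that $\Z_q$ is self-injective, so $\Hom_{\Z_q}\paren{-,\Z_q}$ is exact and therefore commutes with passage to homology --- is true (Baer's criterion on the ideals $d\Z_q$, $d\mid q$) and correctly deployed; together with $\Hom_{\Z_q}\paren{\Z_d,\Z_q}\cong\Z_d$ and the structure theorem for finitely generated $q$-torsion abelian groups, it yields the isomorphism. One caution: the displayed ``universal coefficient sequence over $\Z_q$'' is not an off-the-shelf theorem you can cite, since $\Z_q$ is not hereditary; your proof really rests on the self-injectivity computation, so that step (a short chase with the exact sequences $0\to Z_n\to C_n\to B_{n-1}\to 0$ and $0\to B_n\to Z_n\to H_n\to 0$) should be written out rather than asserted. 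Your closing remark is also accurate and worth keeping: the abstract isomorphism holds for any finite complex with no hypothesis on $H_{j-2},$ and your argument even shows that the natural evaluation map $H^{j-1}\paren{P;\;\Z_q}\to\Hom\paren{H_{j-1}\paren{P;\;\Z_q},\Z_q}$ is always an isomorphism, which is the form in which the proposition is actually invoked for the maps $h_j$ in the proof of Lemma~\ref{lemma:UCTCboundaryfinite}.
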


A percolation subcomplex $P$ of a box $r$ satisfies the condition $H_{j-2}\paren{P;\;\Z_q}$ , leading to the following corollary.
\begin{Corollary}\label{cor:UCTC}
    Define
    \[\hat{\mu}_{X,p,q,i} \propto p^{\abs{P}}\paren{1-p}^{\abs{X^{\paren{i}}} - \abs{P}}\abs{H_{i-1}\paren{P;\;\Z_q}}\,.\]
    Then for any box $r \subset \Z^d,$
    \[\mu_{r,p,q,i} \,{\buildrel d \over =}\, \hat{\mu}_{r,p,q,i}\,.\]
\end{Corollary}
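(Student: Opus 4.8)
The plan is to reduce the statement to Proposition~\ref{prop:UCTC} applied with $j = i$, so that the real content is verifying its hypothesis: that $H_{i-2}\paren{P;\;\Z_q}$ is a free $\Z_q$-module for every percolation subcomplex $P$ of the box $r$. Granting this, Proposition~\ref{prop:UCTC} supplies an isomorphism $H^{i-1}\paren{P;\;\Z_q} \cong H_{i-1}\paren{P;\;\Z_q}$, and in particular $\abs{H^{i-1}\paren{P;\;\Z_q}} = \abs{H_{i-1}\paren{P;\;\Z_q}}$, for each such $P$. Since $\mu_{r,p,q,i}$ and $\hat{\mu}_{r,p,q,i}$ attach to each $P$ the identical combinatorial weight $p^{\abs{P}}\paren{1-p}^{\abs{r^{\paren{i}}} - \abs{P}}$, their unnormalized weights then agree configuration by configuration; normalizing, the two probability measures coincide, which is precisely the claimed equality in distribution.

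The crux is therefore the freeness assertion, and I would first reduce it to a computation on the skeleton. Because $r^{\paren{i-1}} \subset P \subset r^{\paren{i}}$, the complex $P$ agrees with the full $(i-1)$-skeleton $r^{\paren{i-1}}$ on every cell of dimension at most $i-1$, differing only by a choice of $i$-cells. As $H_{i-2}$ is computed from the chain groups in degrees $i-1$, $i-2$, and $i-3$ together with the boundary maps between them, adjoining or deleting $i$-cells cannot affect it; hence $H_{i-2}\paren{P;\;\Z_q} = H_{i-2}\paren{r^{\paren{i-1}};\;\Z_q}$, and the same holds in degree $i-3$.

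Next I would pin down the integer homology of the skeleton. Viewed as a full $d$-dimensional cube, the box $r$ is contractible, so its cellular chain complex is exact in all positive degrees; truncating to the $(i-1)$-skeleton changes nothing below degree $i-1$, giving $H_j\paren{r^{\paren{i-1}};\;\Z} = 0$ for $0 < j < i-1$ and $H_0 = \Z$. In particular both $H_{i-2}\paren{r^{\paren{i-1}};\;\Z}$ and $H_{i-3}\paren{r^{\paren{i-1}};\;\Z}$ are free abelian. Feeding this into the integer universal coefficient theorem, the torsion term $\mathrm{Tor}\paren{H_{i-3}\paren{P;\;\Z},\;\Z_q}$ vanishes because $H_{i-3}\paren{P;\;\Z}$ is torsion-free, so $H_{i-2}\paren{P;\;\Z_q} \cong H_{i-2}\paren{P;\;\Z} \otimes \Z_q$ is a free $\Z_q$-module, completing the verification.

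The step I expect to be most delicate is this last one: Proposition~\ref{prop:UCTC} demands freeness over $\Z_q$, which for composite $q$ is not even an integral domain, so I cannot argue directly and must instead route the freeness through integer coefficients and the vanishing of the relevant Tor group. The skeleton homology itself is routine once contractibility is invoked, but the low-dimensional cases $i \in \set{1,2,3}$ deserve separate attention, since there the indices $i-2$ and $i-3$ meet or drop below zero and the groups in question are $\Z$ or trivial rather than vanishing; one should check torsion-freeness in each of these before concluding.
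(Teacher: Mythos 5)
Your proposal takes essentially the same route as the paper: the paper deduces the corollary from Proposition~\ref{prop:UCTC} with $j=i$ by simply asserting that $H_{i-2}\paren{P;\;\Z_q}$ satisfies the required freeness/vanishing condition for any percolation subcomplex $P$ of a box, and your argument via contractibility of $r$, the fact that $P$ and the $(i-1)$-skeleton agree in degrees below $i$, and the universal coefficient theorem is a correct and careful filling-in of that one-line justification (including the low-dimensional cases). No gaps.
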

As a result, we may either work with the homology or cohomology of $P$ as is convenient. The definition involving cohomology is more natural in the context of the coupling of the PRCM with Potts lattice gauge theory (PLGT); $\abs{H^{i-1}\paren{P;\;\Z_q}}$ counts equivalence classes of spin assignments to the $(i-1)$-faces of $X.$ 

Recall that the $(i-1)$-dimensional $q$-state PLGT with inverse temperature parameter $\beta$ on a finite subset $X \subset \Z^d$ is the random element of $C^{i-1}\paren{X;\;\Z_q}$ distributed according to
\[\nu_{X,\beta,q,k}\paren{f} \propto e^{-\beta H\paren{f}}\,,\]
where $H$ is the Hamiltonian defined by
\begin{equation}
    \label{eq:hamiltonian_potts}
    H\paren{f}=-\sum_{\sigma}K\paren{\delta f\paren{\sigma},0}\,.
\end{equation}
Here $K$ is the Kronecker delta function and $\delta$ is the coboundary operator.

The PRCM and PLGT can be coupled in fashion analogous to the Edwards--Sokal coupling of the classical random-cluster model and the Potts model~\cite{edwards1988generalization,swendsen1987nonuniversal}. This was proven for general $q$ independently in~\cite{duncan2023sharp} and~\cite{shklarov2023edwards}.

\begin{Theorem}[\cite{duncan2023sharp,shklarov2023edwards}]\label{prop:couple}
Let $X$ be a finite cubical complex, $q\in\N+1,$ $\beta \in [0,\infty),$ and $p = 1-e^{-\beta}.$ Define a coupling on $C^{i-1}\paren{X}\times \set{0,1}^{X^{\paren{i}}}$ by
\[\kappa\paren{f,P} \propto \prod_{\sigma \in X^{\paren{i}}}\brac{\paren{1-p}I_{\set{\sigma \notin P}} + p I_{\set{\sigma \in P,\delta f\paren{\sigma}=0}}}\,.\]

Then $\kappa$ has the following marginals.
\begin{itemize}
    \item The first marginal is
    $\nu_{X,\beta,q,i-1}.$ 
    \item The second marginal is
    $\mu_{X,p,q,i}.$ 
\end{itemize}
\end{Theorem}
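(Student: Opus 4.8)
The plan is to run the Edwards--Sokal argument in its cellular form: I would compute each of the two marginals of $\kappa$ directly, exploiting the fact that the weight factors as a product over the $i$-plaquettes $\sigma \in X^{\paren{i}}$. Summing out one coordinate then collapses, plaquette by plaquette, into an elementary two-term sum, so the only genuine work is interpreting the combinatorial quantities that survive: a weighted count of disagreements for the spin marginal, and a count of cocycles for the subcomplex marginal.

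For the first marginal I would fix $f$ and sum over all $P \subset X^{\paren{i}}$. Because membership of each $\sigma$ in $P$ is an independent binary choice in the product, the sum factors into $\prod_\sigma \brac{\paren{1-p} + p\, I_{\set{\delta f\paren{\sigma}=0}}}$. Each factor equals $1$ when $\delta f\paren{\sigma}=0$ and $1-p$ otherwise, so the marginal is proportional to $\paren{1-p}^{N_f}$, where $N_f = \abs{\set{\sigma : \delta f\paren{\sigma} \neq 0}}$. Writing $1-p = e^{-\beta}$ and using $H\paren{f} = -\abs{\set{\sigma : \delta f\paren{\sigma}=0}} = N_f - \abs{X^{\paren{i}}}$ from \eqref{eq:hamiltonian_potts}, this equals $e^{-\beta\abs{X^{\paren{i}}}}e^{-\beta H\paren{f}}$; the leading factor is constant in $f$, so the marginal is exactly $\nu_{X,\beta,q,i-1}$.

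The second marginal is where the cohomology enters, and I expect this to be the main obstacle. Fixing $P$ and summing over $f \in C^{i-1}\paren{X;\,\Z_q}$, the plaquettes outside $P$ each contribute a constant factor $1-p$ and those inside contribute $p$, pulling out $p^{\abs{P}}\paren{1-p}^{\abs{X^{\paren{i}}}-\abs{P}}$ from the sum. What remains is $\sum_f \prod_{\sigma \in P} I_{\set{\delta f\paren{\sigma}=0}}$, i.e.\ the number of $f$ whose coboundary vanishes on every plaquette of $P$. Since $P$ is a percolation subcomplex it contains the full $\paren{i-1}$-skeleton, so $C^{i-1}\paren{P} = C^{i-1}\paren{X}$ and these $f$ are precisely the cocycles, giving a count of $\abs{Z^{i-1}\paren{P;\,\Z_q}}$.

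The crux is then converting this cocycle count into the cohomology factor of Definition~\ref{def:prcm}. From $H^{i-1}\paren{P} = Z^{i-1}\paren{P}/B^{i-1}\paren{P}$ I obtain $\abs{Z^{i-1}\paren{P;\,\Z_q}} = \abs{B^{i-1}\paren{P;\,\Z_q}}\cdot\abs{H^{i-1}\paren{P;\,\Z_q}}$. The key observation is that the coboundary group $B^{i-1}\paren{P} = \im\paren{\delta : C^{i-2}\paren{P} \to C^{i-1}\paren{P}}$ depends only on cells of dimension at most $i-1$, which are identical for every percolation subcomplex of $X$; hence $\abs{B^{i-1}\paren{P;\,\Z_q}}$ is a constant independent of $P$. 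Absorbing it into the normalization leaves the marginal proportional to $p^{\abs{P}}\paren{1-p}^{\abs{X^{\paren{i}}}-\abs{P}}\abs{H^{i-1}\paren{P;\,\Z_q}}$, which is exactly $\mu_{X,p,q,i}$. The only place demanding care is this last bookkeeping step, since it is precisely the $P$-independence of $B^{i-1}$ that makes the cocycle count collapse onto the cohomological weight rather than onto some larger quantity.
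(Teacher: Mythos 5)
Your argument is correct and is the standard Edwards--Sokal computation in cellular form; the paper itself gives no proof of Theorem~\ref{prop:couple} but quotes it from the cited references, where essentially this same marginal calculation appears. The one step that genuinely uses the percolation-subcomplex structure --- that $\abs{B^{i-1}\paren{P;\;\Z_q}}$ is independent of $P$ because coboundaries are determined by the common $(i-2)$- and $(i-1)$-skeleta, so the cocycle count $\abs{Z^{i-1}\paren{P;\;\Z_q}}$ collapses onto $\abs{H^{i-1}\paren{P;\;\Z_q}}$ up to a constant --- is exactly the point you isolate, so nothing is missing.
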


\section{Boundary Conditions}\label{sec:infvolume0}

First, consider the familiar random-cluster model on a graph. A boundary condition on a subgraph $S \subset \Z^d$ induced by some finite vertex set can be thought of as a configuration of edges not contained in $S.$

Let $\xi$ be a set of edges in $\Z^d$ and write $P^{\xi}$ for the set of open edges of $\xi \cap \paren{\Z^d \setminus S}.$ The idea is to define a random-cluster measure on $S$ with the additional edges of $P^{\xi}$ added for the purpose of counting connected components. Of course, $P^{\xi}$ will have infinitely many connected components in general, but finitely many of them are connected to $S.$ 

More precisely, there is a corresponding random-cluster measure on $S$ with boundary condition $\xi$ written as $\mu_{S,p,q,1}^{\xi}\paren{P},$ where the term $\mathbf{b}_0\paren{P}$ counting the number of connected components of $P$ in $S$ is replaced by the number of connected components of $P_{\xi}$ that intersect $S.$ The extremal cases of $\xi$ containing all closed or all open edges are called free and wired boundary conditions respectively, and we write $\mu_{S,p,q,1}^{\mathbf{f}}$ and $\mu_{S,p,q,1}^{\mathbf{w}}$ for the associated measures.

Boundary conditions in the PRCM on $\Z^d$ are defined analogously, in that we want to define a random-cluster model on a subcomplex $X$ with the additional topological information from external plaquettes. Let $\xi$ be a set of plaquettes and recall that $P_{\xi} = P \cup \paren{\xi \cap \paren{\Z^d \setminus X}} \cup \paren{\Z^d}^{\paren{i-1}}.$

\begin{Definition}\label{defn:boundaryconditions}
 Let $\phi : P \to P_{\xi}$ be the inclusion map and let 
 \[\phi^* : H^{i-1}\paren{P_{\xi};\;\Z_q} \to H^{i-1}\paren{P;\;\Z_q}\]
 be the induced map on cohomology. The measure $\mu_{X,p,q,i}^{\xi}$ is defined by
 \[\mu_{X,p,q,i}^{\xi}\paren{P} \propto p^{\abs{P}}\paren{1-p}^{\abs{X^{\paren{i}}} - \abs{P}}\abs{\im\phi^*}\,.\]
 \end{Definition}

Note that taking $i=1$ recovers the definition for the classical RCM. To get a feel for what this definition means, we consider the examples of free and wired boundary conditions. In the former case, $\phi^*$ is surjective and 
 \[\im \phi^* = H^{i-1}\paren{P;\;\Z_q}\]
so the measure (denoted  $\mu_{X,p,q,i}^{\mathbf{f}}$) coincides with the earlier definition of the random-cluster measure on the finite complex $X.$ On the other hand, as long as $X$ does not have nontrivial global homology itself, an element of $H^{i-1}\paren{P_{\xi};\;\Z_q}$ must vanish on $(i-1)$-cycles supported on the boundary of $X$ when we use wired boundary conditions. The wired measure $\mu_{X,p,q,i}^{\mathbf{w}}$ is then the same as the finite volume random-cluster measure on $X$ with the boundary $(i-1)$-cells all identified. Specifically, when $X=r$ is a box in $\Z^d,$ we have that
\[\im\phi^* \cong H^{i-1}\paren{P \cup \partial r;\;\Z_q}\,,\]
a term which will appear again when we discuss Alexander duality. 

An analogue of Corollary~\ref{cor:UCTC} on the equivalence of homological and cohomological perspectives also holds for the PRCM with boundary conditions, which we will prove shortly.

\begin{Lemma}\label{lemma:UCTCboundaryfinite}
    Let $P_1 \subset P_2$ be percolation complexes, and let $\phi^*:H^{i-1}\paren{P_2;\;\Z_q}\rightarrow H^{i-1}\paren{P_1;\;\Z_q}$ and $\phi_*:H^{i-1}\paren{P_1;\;\Z_q}\rightarrow H^{i-1}\paren{P_2;\;\Z_q}$ be the homomorphisms induced by the inclusion $\phi:P_1\hookrightarrow P_2.$ Then 
    \[\abs{\im\phi^*} = \abs{\im\phi_*}\,.\]
    In particular, 
    \[\mu_{X,p,q,i}^{\xi}\paren{P} \propto p^{\abs{P}}\paren{1-p}^{\abs{X^{\paren{i}}} - \abs{P}}\abs{\im\phi^*} = p^{\abs{P}}\paren{1-p}^{\abs{X^{\paren{i}}} - \abs{P}}\abs{\im\phi_*}\,.\]
\end{Lemma}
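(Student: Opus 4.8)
The plan is to reduce the identity $\abs{\im\phi^*} = \abs{\im\phi_*}$ to the statement that the cohomology pullback is, up to natural isomorphism, the algebraic $\Z_q$-dual of the homology pushforward, and then to use that dualizing over $\Z_q$ preserves the cardinality of finite modules. Throughout I read $\phi_*$ as the pushforward $H_{i-1}\paren{P_1;\Z_q}\to H_{i-1}\paren{P_2;\Z_q}$ on homology induced by the inclusion $\phi$ (the superscripts in the statement being a typo for homology).

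First I would record the key algebraic input. For any percolation complex $P$ the cellular cochain complex with $\Z_q$-coefficients is literally the $\Z_q$-dual of the cellular chain complex, $C^*\paren{P;\Z_q} = \Hom_{\Z_q}\paren{C_*\paren{P;\Z_q},\Z_q}$, since $\Hom$ out of the direct sum on cells is the product. Because $\Z_q$ is self-injective (injective as a module over itself), the functor $\Hom_{\Z_q}\paren{-,\Z_q}$ is exact and therefore commutes with the passage to (co)homology. This yields a natural isomorphism
$$\theta_P : H^{i-1}\paren{P;\Z_q} \xrightarrow{\ \sim\ } \Hom_{\Z_q}\paren{H_{i-1}\paren{P;\Z_q},\Z_q}$$
realized by the Kronecker evaluation pairing; equivalently one obtains $\theta_P$ from the universal coefficient theorem, using that a percolation complex has free $H_{i-2}\paren{P;\Z_q}$ (the hypothesis of Proposition~\ref{prop:UCTC}), which makes the universal-coefficient $\mathrm{Ext}$ term vanish. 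I would stress that this argument does not require $P$ to be finite, so it still applies to $P_2 = P_\xi$ even though that complex contains the infinite $(i-1)$-skeleton of $\Z^d$.

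Next I would invoke naturality of $\theta$ with respect to $\phi$. The adjunction $\langle \phi^*\alpha, a\rangle = \langle \alpha, \phi_* a\rangle$ for the Kronecker pairing says precisely that $\theta_{P_1}\circ\phi^* = \Hom_{\Z_q}\paren{\phi_*,\Z_q}\circ\theta_{P_2}$, so that $\im\phi^*$ is identified with $\im \Hom_{\Z_q}\paren{\phi_*,\Z_q}$. Finally I would factor $\phi_* = \iota\circ\pi$ through its image $M = \im\phi_*$, apply the exact contravariant functor $\Hom_{\Z_q}\paren{-,\Z_q}$ (which turns $\pi$ into an injection and $\iota$ into a surjection), and conclude $\im \Hom_{\Z_q}\paren{\phi_*,\Z_q}\cong \Hom_{\Z_q}\paren{M,\Z_q}$. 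As $P_1$ is a finite percolation subcomplex of a box, $M$ is a finite $\Z_q$-module, so $\abs{\Hom_{\Z_q}\paren{M,\Z_q}} = \abs{M}$; chaining the equalities gives $\abs{\im\phi^*} = \abs{M} = \abs{\im\phi_*}$, and the displayed reformulation of $\mu_{X,p,q,i}^{\xi}$ is then immediate.

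The main obstacle is the first step, establishing that $\Z_q$-cohomology pullback is genuinely the dual of $\Z_q$-homology pushforward, naturally and without assuming $P$ finite or $q$ prime. The subtle points are that $\Z_q$ is neither a field nor a PID, so one must use its self-injectivity (equivalently the vanishing of the universal-coefficient $\mathrm{Ext}$ term afforded by the freeness of $H_{i-2}$) rather than a naive rank argument, and that $P_2$ may be infinite, which rules out citing a bare finite-dimensional duality and forces the cardinality comparison to be localized to the finite module $M = \im\phi_*$.
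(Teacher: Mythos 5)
Your argument is correct and is essentially the paper's own proof: the paper uses the same commutative square relating $\phi^*$ to $\Hom\paren{\phi_*,\Z_q}$ via the Kronecker evaluation isomorphisms (its maps $h_1,h_2$, which are isomorphisms by the universal coefficient theorem), and concludes $\abs{\im\phi^*}=\abs{\Hom\paren{\im\phi_*,\Z_q}}=\abs{\im\phi_*}$. You simply make explicit the ingredients the paper cites more tersely --- the self-injectivity of $\Z_q$ behind the exactness of $\Hom_{\Z_q}\paren{-,\Z_q}$, the factoring of $\phi_*$ through its image, and the observation that the final cardinality comparison only needs the finite module $\im\phi_*$ even though $P_\xi$ is infinite.
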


For finitely supported boundary conditions, there is a straightforward relationship between dual boundary conditions in terms of the definitions that we have already provided. However, the general case is more subtle, and in order to state Theorem~\ref{thm:mostgeneralduality} in full generality, we will also want a notion of boundary conditions that are ``wired at infinity.'' Here it will be more convenient to work with homology rather than cohomology. As motivation, consider an approximation of $\xi$ given by $\xi \cap r$ for a large box $r.$ We will soon see that $\mu_{X,p,q,i}^{\xi} =\mu_{X,p,q,i}^{\xi \cap r}$ for sufficiently large $r.$ This can be thought of as a free approximation, since it is equivalent to setting all sufficiently distant plaquettes to be closed. One could just as easily consider a wired approximation, in which the distant plaquettes are taken to be open. This also converges, but to a possibly different limit. For example, consider the classical random-cluster model in a box with boundary conditions that contain two disjoint infinite paths meeting the boundary of the box at vertices $v$ and $w.$ Clearly, $v$ and $w$ are externally connected in any wired approximation and externally disconnected in any free approximation.

In order to capture the limit of wired approximations, it is then natural to consider cycles that ``pass through infinity'' in some sense. One way to formalize this is using Borel--Moore homology, for which an exposition of the viewpoint we use here can be found in Chapter 3 of~\cite{hughes1996ends}. Recall that Borel--Moore homology of a space $X$ with $i$-cells $X^{\paren{i}}$ can be defined in terms of the locally finite chain groups
\[C_k^{\mathrm{BM}}\paren{X;\;\Z_q} \coloneqq 
\set{\sum_{\sigma \in X^{\paren{k}}} a_{\sigma} \sigma : a_{\sigma} \in \Z_q}\,.\]
The important difference between these and the usual chain groups is that the sum is permitted to have infinitely many nonzero terms. The usual boundary operator can then be extended linearly to obtain 
\[\partial_k^{\mathrm{BM}} : C_k^{\mathrm{BM}}\paren{X;\;\Z_q} \to C_{k-1}^{\mathrm{BM}}\paren{X;\;\Z_q}\,,\]
and then the homology is given by
\[H_k^{\mathrm{BM}}\paren{X;\;\Z_q} \coloneqq \ker \partial_k^{\mathrm{BM}} / \im \partial_{k+1}^{\mathrm{BM}}\,.\] For example, $H_d\paren{\Z^d;\;\Z_q}=0$ but the oriented sum of all $d$-cells of $\Z^d$ is a non-trivial Borel--Moore cycle and $H_d^{\mathrm{BM}}\paren{\Z^d;\;\Z_q}\cong \Z_q.$ Also, $H_0\paren{\Z^d;\;\Z_q}\cong \Z_q$ and $H_0^{\mathrm{BM}}\paren{\Z^d;\;\Z_q}=0.$ Finally, returning to the example with infinite paths extending from $v$ and $w,$ there is a Borel--Moore chain with boundary $v-w,$ namely the (infinite) sum of the edges in the two paths.

\begin{Definition}\label{defn:wiredboundaryconditions}
    As before, denote by $\phi : P \to P_{\xi}$ the inclusion map. Also, let 
    \[\phi_*^{\mathrm{BM}} : H_{i-1}^{\mathrm{BM}}\paren{P;\;\Z_q} \to H_{i-1}^{\mathrm{BM}}\paren{P_{\xi};\;\Z_q}\]
    be the induced map on Borel--Moore homology. Then define
    \[\mu_{X,p,q,i}^{\overline{\xi}}\paren{P} \propto p^{\abs{P}}\paren{1-p}^{\abs{X^{\paren{i}}} - \abs{P}}\abs{\im\paren{\phi_*^{\mathrm{BM}}}}\,.\]
\end{Definition}

This is a slight abuse of notation, since we have not defined $\overline{\xi}$ by itself, but we only write it in the context of this measure. Note that if $\xi$ contains all but finitely many plaquettes of $\Z^d$ then $\mu_{X,p,q,i}^{\overline{\xi}}{\buildrel d \over =} \mu_{X,p,q,i}^{\xi}.$ We will show later (in Proposition~\ref{prop:largeboundary}) boundary conditions of the form $\bar{\xi}$ can be modified so that this is the case. Unlike Definitions~\ref{def:prcm} and~\ref{defn:boundaryconditions}, Definition~\ref{defn:wiredboundaryconditions} does not have an immediate cohomological version.

These are not the only possible definitions of boundary conditions. It is common to consider boundary conditions of the Potts model defined by specifying boundary spins directly, which leads to a more general notion than we consider here. Various subsets of these types of conditions are defined in~\cite{shklarov2023edwards}, where they are studied as subgroups of the full group of possible spin states using elementary group theory. As we are motivated by the limiting measure in $\Z^d,$ we will restrict ourselves to those which are consistent with some configuration of external plaquettes under the coupling, referred to as ``imprint boundary conditions'' in~\cite{shklarov2023edwards}. Since these arise from concrete cubical complexes, this allows us to take a more geometric approach. Notice that the special case of constant spins on the boundary of the domain does arise from an external plaquette configuration because it is equivalent to wired boundary conditions up to a choice of gauge (in topological terms, up to a choice of coboundary).

Next, we prove Lemma~\ref{lemma:UCTCboundaryfinite}.
\begin{proof}[Proof of Lemma~\ref{lemma:UCTCboundaryfinite}]
We have the following commutative diagram.

\[\begin{tikzcd}
H^{i-1}\paren{P_2;\;\Z_q} \arrow{r}{\phi^*} \arrow{d}{h_2} & H^{i-1}\paren{P_1;\;\Z_q} \arrow{d}{h_1} \\
\Hom\paren{H_{i-1}\paren{P_2;\;\Z_q},\Z_q} \arrow{r}{\circ \phi_*} & \Hom\paren{H_{i-1}\paren{P_1;\;\Z_q},\Z_q} \
\end{tikzcd}
\]

Here, $h_j:H^{i-1}\paren{P_j;\; \Z_q}\rightarrow \Hom\paren{H_{i-1}\paren{P_j;\;\Z_q},\Z_q}$ is the homomorphism induced by sending $\brac{f}\in  H^{i-1}\paren{P_j;\; \Z_q}$ to the homomorphism that sends $\brac{\sigma} \in H_{i-1}\paren{P_j;\;\Z_q}$ to $f\paren{\sigma}$ (this is well-defined by standard arguments; see Section 3.1. of~\cite{hatcher2002algebraic}). The maps $h_j$ are in fact isomorphisms by Proposition~\ref{prop:UCTC}. In addition, the lower horizontal row sends a homomorphism $f\in \Hom\paren{H_{i-1}\paren{P_2;\;\Z_q},\Z_q}$ to $f\circ \phi_* \in  \Hom\paren{H_{i-1}\paren{P_1;\;\Z_q},\Z_q}.$ By following the diagram, we have that 
\[\im\phi^* \cong \Hom\paren{\phi_*\paren{H_{i-1}\paren{P_1;\;\Z_q}},\Z_q}\]
and, in particular,
\[\abs{\im\phi^*}= \abs{\Hom\paren{\phi_*\paren{H_{i-1}\paren{P_1;\;\Z_q}},\Z_q}}=\abs{\im\phi_*}\,.\]
    
\end{proof}

We now show that the effects of an infinite volume boundary condition appear in sufficiently large finite approximations. We first prove a straightforward characterization of nullhomology in the Borel-Moore setting in terms of finite approximations. Although we work with Borel-Moore chains to streamline the proof, we remark that the second condition in the lemma is equivalent to an analogous one for ordinary homology.

\begin{Lemma}\label{lemma:BMapprox}
    Let $X \subset \Z^d$ be an $i$-dimensional percolation subcomplex, and let $\xi$ be boundary conditions. Then for any finitely supported $\gamma \in Z^{\mathrm{BM}}_{i-1}\paren{X;\;\Z_q},$ $0 = \brac{\gamma} \in H^{\mathrm{BM}}_{i-1}\paren{X;\;\Z_q}$ if and only if there is an $N \in \N$ so that for all $n>N,$ $\gamma$ is homologous to an $(i-1)$-cycle $\gamma_n$ supported on $\partial \Lambda_n.$ 
\end{Lemma}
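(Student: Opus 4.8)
The plan is to prove the equivalence by analyzing how a finitely supported Borel--Moore cycle $\gamma$ interacts with large boxes $\Lambda_n$. The key structural tool is the long exact sequence of the pair $(X, X \setminus \Lambda_n)$ in Borel--Moore homology, or more concretely a direct chain-level argument using the fact that $\gamma$ has finite support. Since $\gamma$ is finitely supported, there is some $M$ so that $\gamma$ is supported inside $\Lambda_M$; we will only look at boxes $\Lambda_n$ with $n > M$, so that $\gamma$ lives in the interior.

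First I would prove the easy direction, namely that the existence of the approximating cycles $\gamma_n$ implies $[\gamma] = 0$. If $\gamma$ is homologous in $X$ to a cycle $\gamma_n$ supported on $\partial \Lambda_n$ for every large $n$, then I want to produce a single Borel--Moore chain whose boundary is $\gamma$. The idea is to stitch together the finite chains $\tau_n$ witnessing the homologies $\gamma \sim \gamma_n$: more precisely, from $\gamma \sim \gamma_{n}$ and $\gamma \sim \gamma_{n+1}$ we get a chain realizing $\gamma_n \sim \gamma_{n+1}$ supported in the annular region between $\partial\Lambda_n$ and $\partial\Lambda_{n+1}$ (after subtracting, $\gamma$ cancels). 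Assembling these annular chains yields a locally finite chain in $X$, and I would check that its Borel--Moore boundary is exactly $\gamma$ (the boundary contributions on each $\partial\Lambda_n$ cancel in pairs, and the $\gamma_n$ are ``pushed to infinity''). This shows $\gamma = \partial^{\mathrm{BM}} \tau$, so $[\gamma] = 0 \in H_{i-1}^{\mathrm{BM}}(X;\Z_q)$.

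For the converse, suppose $[\gamma] = 0$, so there is a Borel--Moore chain $\tau \in C_i^{\mathrm{BM}}(X;\Z_q)$ with $\partial^{\mathrm{BM}} \tau = \gamma$. The plan is to truncate $\tau$ at $\Lambda_n$. Let $\tau_n$ be the restriction of $\tau$ to the $i$-cells contained in $\Lambda_n$ (a \emph{finite} chain, since $X \cap \Lambda_n$ has finitely many cells). Then $\partial \tau_n$ differs from $\gamma = \partial \tau$ only along the boundary region near $\partial \Lambda_n$: the cells where the truncation cuts $\tau$. Concretely, $\partial \tau_n - \gamma$ is supported on (or near) $\partial \Lambda_n$, and since both $\partial \tau_n$ and $\gamma$ are genuine cycles, their difference $\gamma_n \coloneqq \gamma - \partial \tau_n$ is an $(i-1)$-cycle supported on $\partial \Lambda_n$. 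Then $\gamma - \gamma_n = \partial \tau_n$ with $\tau_n$ finite, so $\gamma$ is homologous (in ordinary homology, and hence in Borel--Moore homology) to $\gamma_n$, which is supported on $\partial \Lambda_n$ as required. Taking $N = M$ ensures this works for all $n > N$.

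The main obstacle I anticipate is making the truncation argument clean, specifically ensuring that $\gamma_n$ is genuinely supported on $\partial \Lambda_n$ and is genuinely a cycle. The subtlety is that cutting a Borel--Moore $i$-chain along the boundary of the box produces boundary terms on the $(i-1)$-cells lying in $\partial \Lambda_n$, and I must verify that these are the only surviving contributions (i.e. that no $(i-1)$-cells in the interior appear, which follows because $\partial \tau = \gamma$ is supported in $\Lambda_M \subset \mathrm{int}\,\Lambda_n$ and those interior contributions cancel against $\gamma$). I would also want to confirm that ``supported on $\partial \Lambda_n$'' is interpreted correctly as the $(i-1)$-cells lying in the topological boundary of the box, and that the homology $\gamma \sim \gamma_n$ witnessed by the finite chain $\tau_n$ indeed transfers to Borel--Moore homology via the natural inclusion $C_\bullet \hookrightarrow C_\bullet^{\mathrm{BM}}$. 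The remark in the lemma statement that the second condition is equivalent to an ordinary-homology version is precisely what this finite-chain witness $\tau_n$ establishes, so I would emphasize that all the homologies produced are via finitely supported chains.
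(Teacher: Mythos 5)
There is a genuine gap, and it sits in the direction you label ``easy'' (existence of the approximating cycles $\gamma_n$ implies $[\gamma]=0$); this is in fact the substantive direction --- the truncation argument for the other implication is the one the paper dismisses as obvious. Your stitching step asserts that, given finite chains $\tau_n$ witnessing $\gamma\sim\gamma_n$, the difference $\tau_{n+1}-\tau_n$ is supported in the annular region between $\partial\Lambda_n$ and $\partial\Lambda_{n+1}$ ``after $\gamma$ cancels.'' That is not justified: what cancels is the contribution of $\gamma$ to the \emph{boundary}, so you only know that $\partial(\tau_{n+1}-\tau_n)=\gamma_{n+1}-\gamma_n$ is supported on the two boundary spheres. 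The chain $\tau_{n+1}-\tau_n$ itself can have support anywhere in $\Lambda_{n+1}$, because the witnesses $\tau_n$ are chosen independently of one another and nothing forces $\tau_{n+1}$ to restrict to $\tau_n$. Without support control on the increments, the infinite sum you assemble need not be locally finite --- a fixed $i$-cell can receive nonzero contributions from infinitely many increments, with coefficients in $\Z_q$ that oscillate rather than stabilize --- so it does not define an element of $C_i^{\mathrm{BM}}\paren{X;\;\Z_q}$.

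The missing idea is a compactness argument that lets you choose the $\tau_n$ \emph{compatibly}, i.e.\ so that the restriction of $\tau_m$ to $\Lambda_n$ equals $\tau_n$ for $m>n$; only then is each increment genuinely supported outside $\Lambda_n$, making the telescoping sum locally finite with Borel--Moore boundary $\gamma$. The paper does exactly this: it calls a chain $\tau_{n_0}$ \emph{extendable} if infinitely many larger boxes admit witnesses restricting to it, observes that there are only finitely many candidate chains on each $\Lambda_{n_0}$ (coefficients lie in the finite group $\Z_q$) so at least one is extendable, and then builds the compatible family one level at a time --- a K\"onig's-lemma argument. Your truncation argument for the converse direction ($[\gamma]=0$ implies the $\gamma_n$ exist) is essentially correct and is what the paper treats as the obvious implication, modulo the boundary bookkeeping you already flag.
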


\begin{proof}
    The forward implication is obvious, so suppose that for each $n>N$ there exists a $\tau_n\in C_i\paren{X \cap \Lambda_n;\;\Z_q}$ so that 
\begin{equation}
\label{eq:tau}
\partial \tau_n = \gamma +\gamma_n
\end{equation}
where $\gamma_n$ is supported on $\partial \Lambda_n.$ By a standard argument, we can choose the chains $\tau_n$ to be compatible in the sense that the restriction of $\tau_m$ to $\Lambda_n$ is $\tau_n$ for $m>n.$ Say that $\tau_{n_0}\in C^{\mathrm{BM}}_i\paren{X\cap \Lambda_{n_0};\;\Z_q}$ is \emph{extendable} if there exist infinitely many $n_1>n_0$ and  $\tau_{n_1}\in C^{\mathrm{BM}}_i\paren{X\cap \Lambda_{n_1};\;\Z_q}$ which satisfy (\ref{eq:tau}) and so that
$$\tau_{n_1}=\tau_{n_0}+\eta_{n_1,n_0}$$
for some $\eta_{n_1,n_0}$ supported outside of $\Lambda_{n_0}.$ 
As there are only finitely many choices of $\tau_{n_0}$ and the restriction of a chain satisfying that equation for a larger value of $n$ satisfies it for $n_0,$ there exists at least one extendable choice of $\tau_{n_0}.$ Suppose $n$ is large enough so that that $\gamma$ is supported on $\Lambda_{n}$ and choose an extendable chain $\tau_n.$ Since $\tau_n$ is extendable, there must exist an extendable choice of $\tau_{n+1}\in C^{\mathrm{BM}}_i\paren{X\cap \Lambda_{n+1};\;\Z_q}$ whose restriction to $\Lambda_n$ is $\tau_n.$ Continuing this construction one step at a time for all $m>n$ results in the desired compatible family of chains $\set{\tau_m}_{m\geq n}.$ Setting $\eta_{n}=\tau_{m+1}-\tau_m$ we obtain that
$$\eta\coloneqq \sum_{m=n}^{\infty} \eta_n$$
is an element of $C^{\mathrm{BM}}_i\paren{X;\;\Z_q}$ so that $\partial\eta=\gamma.$ 
\end{proof}

Given boundary conditions $\xi$ and a subcomplex $X \subset \Z^d,$ define 
\[\xi_{X} \coloneqq \xi \cap X\] 
and
\[\hat{\xi}_X \coloneqq \xi \cup \paren{\Z^d \setminus X}^{\paren{i}}\,.\]
Note that for a sufficiently large box $r,$ $\hat{\xi}_X$ and $\hat{\xi}_X \cap r$ have the same effect as boundary conditions for $X.$ We now use this observation to show that any boundary condition, including one of the type introduced in Definition~\ref{defn:wiredboundaryconditions}, can be replaced by a finite boundary condition from Definition~\ref{defn:boundaryconditions}. After that, we will see that any of these measures can be obtained from one of the form given in Definition~\ref{def:prcm} by conditioning.

\begin{Proposition}\label{prop:largeboundary}
    Let $r$ be a box in $\Z^d$ and let $\xi$ be boundary conditions for PRCM on $r.$ Then there is a cube $\Lambda_n$ containing $r$ so that for any  any $r' \supset \Lambda_n,$
    \[\mu_{r ,p,q,i}^{\xi} \stackrel{d}{=} \mu_{r ,p,q,i}^{\xi_{r'}}\,\]
    and 
    \[\mu_{r ,p,q,i}^{\overline{\xi}} \stackrel{d}{=} \mu_{r ,p,q,i}^{\hat{\xi}_{r'}}\,\]
\end{Proposition}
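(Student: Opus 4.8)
The plan is to reduce each distributional identity to a statement about the homological weight attached to each fixed percolation configuration $P$ on $r$, of which there are only finitely many. Since the combinatorial factors $p^{\abs{P}}\paren{1-p}^{\abs{r^{(i)}}-\abs{P}}$ do not depend on the boundary condition, it suffices to show that for every such $P$ the weight $\abs{\im\phi^*}$ (respectively $\abs{\im\phi_*^{\mathrm{BM}}}$) agrees for the two boundary conditions in question. By Lemma~\ref{lemma:UCTCboundaryfinite} and the first isomorphism theorem, $\abs{\im\phi_*} = \abs{H_{i-1}\paren{P;\;\Z_q}}/\abs{\ker\phi_*}$, and since $P$ is a finite complex both $H_{i-1}\paren{P;\;\Z_q}$ and $H_{i-1}^{\mathrm{BM}}\paren{P;\;\Z_q}$ are finite and coincide; thus for each statement it is enough to prove that the kernel --- the set of classes in $H_{i-1}\paren{P;\;\Z_q}$ that die under the relevant inclusion --- is unchanged.

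For the free identity, the kernel under $\xi$ consists of the classes $\brac{\gamma}$ with $\gamma$ nullhomologous in $P_\xi$, and likewise for $\xi_{r'}$ in $P_{\xi_{r'}}$. Because $\xi_{r'}=\xi\cap r'\subseteq\xi$ we have $P_{\xi_{r'}}\subseteq P_\xi$, so every class killed over $\xi_{r'}$ is killed over $\xi$. For the reverse inclusion I would use finiteness: ranging over the finitely many configurations $P$ and the finitely many classes in each $H_{i-1}\paren{P;\;\Z_q}$, choose for each killed class a \emph{finite} bounding $i$-chain in $P_\xi$; all of these are supported in some cube $\Lambda_n\supseteq r$. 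For any $r'\supseteq\Lambda_n$ these bounding chains already lie in $P_{\xi_{r'}}$, so the two kernels coincide and the measures agree.

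For the wired identity I would first invoke the remark following Definition~\ref{defn:wiredboundaryconditions}: since $\hat\xi_{r'}=\xi\cup\paren{\Z^d\setminus r'}^{(i)}$ omits only finitely many plaquettes, $\mu_{r,p,q,i}^{\hat\xi_{r'}}\stackrel{d}{=}\mu_{r,p,q,i}^{\overline{\hat\xi_{r'}}}$, which reduces the claim to comparing the two Borel--Moore boundary conditions, i.e.\ the maps of $H_{i-1}^{\mathrm{BM}}\paren{P;\;\Z_q}$ into $H_{i-1}^{\mathrm{BM}}\paren{P_\xi;\;\Z_q}$ and into $H_{i-1}^{\mathrm{BM}}\paren{P_{\hat\xi_{r'}};\;\Z_q}$, where $P_\xi\subseteq P_{\hat\xi_{r'}}=P_\xi\cup\paren{\Z^d\setminus r'}^{(i)}$. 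The inclusion again shows that any class BM-null in $P_\xi$ is BM-null in $P_{\hat\xi_{r'}}$. For the reverse I would apply Lemma~\ref{lemma:BMapprox}: a finitely supported $\gamma$ is BM-null in $P_\xi$ exactly when, for all large $n$, it is homologous within $P_\xi$ to an $(i-1)$-cycle on $\partial\Lambda_n$; running this over the finitely many classes produces a single threshold, hence a single cube $\Lambda_n$ working for all of them. The passage between the two sides rests on the topological fact that any $(i-1)$-cycle supported on $\partial\Lambda_n$ bounds in the exterior $i$-skeleton, which holds for $1\le i\le d-1$ since the closed complement of a box is homotopy equivalent to $S^{d-1}$ and $\widetilde H_{i-1}\paren{S^{d-1};\;\Z_q}=0$ in this range.

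The main obstacle is this last reverse inclusion: a finite bounding chain in the enlarged complex $P_{\hat\xi_{r'}}$ may exploit exterior plaquettes absent from $P_\xi$, so it cannot simply be transplanted. The key observation is that any such chain must cross the collar $r'\setminus r$ using only the shared plaquettes of $\xi$, because the cells of $\paren{\Z^d\setminus r'}^{(i)}$ all lie outside $r'$; this reduces the question to whether a cycle pushed out near $\partial r'$ can be capped \emph{at infinity} inside $P_\xi$. Combining this collar decomposition with Lemma~\ref{lemma:BMapprox} and the $S^{d-1}$-capping fact is where the argument must be executed carefully, and where the uniform choice of $n$ afforded by the finiteness of the configuration space is essential.
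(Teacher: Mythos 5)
Your proposal is correct and follows essentially the same route as the paper: finiteness of the configuration and class space to extract a uniform cube $\Lambda_n$, the inclusions $P_{\xi_{r'}}\subseteq P_{\xi}$ (resp.\ $P_{\xi}\subseteq P_{\hat{\xi}_{r'}}$) for one containment of kernels, and restriction of bounding chains to the box combined with Lemma~\ref{lemma:BMapprox} for the other. The step you flag as delicate is exactly the paper's appeal to Lemma~\ref{lemma:BMapprox} via your collar observation, and the $S^{d-1}$-capping fact you invoke is not actually needed, since the direction it would serve already follows from the trivial inclusion $P_{\xi}\subseteq P_{\hat{\xi}_{r'}}$.
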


\begin{proof}
We first prove the statement for $\mu_{r ,p,q,i}^{\xi}.$ Fix a percolation subcomplex $P$ of $r.$ Here we will view the cluster weight term as homological rather than cohomological by applying Lemma~\ref{lemma:UCTCboundaryfinite} (so that we do not need to switch perspectives for the proof of the second statement). Roughly speaking, our goal is to show that  $\abs{\im\phi_*}$ is determined by some finite subcomplex of $P_{\xi},$ where 
\[\phi_*:H_{i-1}\paren{P;\;\Z_q}\to H_{i-1}\paren{P_{\xi};\;\Z_q}\]
is the homomorphism induced by the inclusion $P\hookrightarrow P_{\xi}.$ 

For each $\gamma\in \ker \phi_*,$ we can find a chain $\tau_{\gamma} \in C_i\paren{P_{\xi};\;\Z_q}$ so that $\partial \tau_{\gamma} = \gamma.$ By definition, $\tau_{\gamma}$ is supported on finitely many $i$-plaquettes, and therefore on $\Lambda_n \cap P_{\xi}$ for a sufficiently large $n.$ In fact, we may choose $n$ to be large enough so that for all $\gamma \in \ker \phi_*,$ $\tau_{\gamma}$ is supported on $\Lambda_n \cap P_{\xi}.$ Given $r' \supset \Lambda_n,$ define 
\[\phi^{r'}_*:H_{i-1}\paren{P;\; \Z_q}\rightarrow H_{i-1}\paren{r' \cap P_{\xi};\;\Z_q}\]
to be the map on inclusion. From the first isomorphism theorem we have that
\[\ker \phi_*=\ker \phi^{r'}_*\implies \abs{\im \phi_*}= \abs{\im \phi^{r'}_*}\,.\]
Since there are only finitely many percolation subcomplexes $P$ of $r,$ this equality holds for all of them when $n$ is sufficiently large. We can therefore find a $n$ so that 
\[\mu_{r ,p,q,i}^{\xi} \stackrel{d}{=} \mu_{r ,p,q,i}^{\xi_{r'}}\]
for all $r' \supset \Lambda_n.$

The case of $\mu_{r ,p,q,i}^{\overline{\xi}}$ is similar in spirit. Notice that the size of the kernel of the induced map on Borel--Moore homology 
$$\phi_*^n:H^{\mathrm{BM}}_{i-1}\paren{P;\;\Z_q}\to H^{\mathrm{BM}}_{i-1}\paren{P_{\hat{\xi}_{\Lambda_n} ;\;\Z_q}}$$
is decreasing in $n.$ As such, it suffices to show that there is an $n$ so that $\ker\phi_*^n\subseteq \ker \phi_*^{\mathrm{BM}}.$ It follows from Lemma~\ref{lemma:BMapprox} that for any fixed $P,$ $\gamma\in  \ker \phi_*^{\mathrm{BM}}$ if and only if $\gamma\in  \ker \phi_*^n$ for all sufficiently large $n.$ Then, as there are only finitely many choices of $P$ and $\gamma,$ we can choose $n$ large enough so that this is true for all of them.

\end{proof}

Although it is not important in the context of this paper, we remark that although there is not an obvious analogue of Lemma~\ref{lemma:UCTCboundaryfinite} for $\mu^{\overline{\xi}}_{r,p,q,i},$ one can use Proposition~\ref{prop:largeboundary} to give an alternative definition as a limit of measures defined via cohomology.

These results allow us to give an alternative topological proof of the fact that boundary conditions are compatible with conditioning on subcomplexes, which appears as Proposition 60 of~\cite{shklarov2023edwards}. We are also able to generalize slightly to the wired boundary conditions of Definition~\ref{defn:wiredboundaryconditions}. 

\begin{Corollary}\label{cor:boundarywelldefined}
    Let $r_1\subset r_2$ be boxes in $\Z^d$ and let $\xi_2$ be boundary conditions for $r_2.$ For a subset $\xi_1$ of the $i$-plaquettes of $r_2\setminus r_1$ let $A\paren{\xi_1}$ be the event that $P\cap r_2\setminus r_1=\xi_1.$ Then,
   if
    \[\mu_1=\mu_{r_1 ,p,q,i}^{\xi_1\cup \xi_2}\]
    is the random-cluster measure on $r_1$ with boundary conditions $\xi_1\cup \xi_2$ and 
   \[\mu_2=\paren{\mu_{r_2,p,q,i}^{\xi_2} \middle \| A\paren{\xi_1}}\Big|_{r_1}\]
    is the restriction to $r_1$ of the random-cluster with boundary conditions $\xi_2$ conditioned on the event $A\paren{\xi_1},$
   \[\mu_1 \stackrel{d}{=}\mu_2 \,.\]
   Likewise, setting
   \[\overline{\mu}_1=\mu_{r_1 ,p,q,i}^{\overline{\xi_1\cup \xi_2}}\]
   and
   \[\overline{\mu}_2=\paren{\mu_{r_2,p,q,i}^{\overline{\xi_2}} \middle \| A\paren{\xi_1}}\Big|_{r_1}\,,\]
   we have
   \[\overline{\mu}_1 \stackrel{d}{=} \overline{\mu}_2 \,.\]
\end{Corollary}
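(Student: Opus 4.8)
The plan is to prove the two statements in parallel, since they have the same logical structure; the key is that conditioning on the event $A(\xi_1)$ makes the open plaquettes of $r_2 \setminus r_1$ coincide exactly with $\xi_1$, so from the point of view of $r_1$ the configuration $\xi_1$ together with the external data $\xi_2$ functions exactly as the boundary condition $\xi_1 \cup \xi_2$. Concretely, I would compare the two measures as weighted distributions on percolation subcomplexes $P$ of $r_1$ and show that the weights agree up to a multiplicative constant that cancels in normalization.

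First I would unpack $\mu_2$. Conditioning $\mu_{r_2,p,q,i}^{\xi_2}$ on $A(\xi_1)$ fixes the status of every $i$-plaquette in $r_2 \setminus r_1$, so the remaining randomness is entirely in the plaquettes of $r_1$. For a fixed $P \subset r_1$, the full configuration on $r_2$ is $P \cup \xi_1$, and its conditional weight is $p^{\abs{P} + \abs{\xi_1}}(1-p)^{(\abs{r_2^{(i)}} - \abs{P} - \abs{\xi_1})}\abs{\im \phi^*_{(P\cup\xi_1)_{\xi_2}}}$, where the cohomology map is induced by the inclusion $P \cup \xi_1 \hookrightarrow (P \cup \xi_1)_{\xi_2}$. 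The factors $p^{\abs{\xi_1}}(1-p)^{\abs{r_2^{(i)}\setminus r_1^{(i)}} - \abs{\xi_1}}$ and the normalizing constant of the conditional measure are independent of $P$, so the only $P$-dependent pieces are $p^{\abs{P}}(1-p)^{\abs{r_1^{(i)}} - \abs{P}}$ and the cluster-weight term. Thus the entire problem reduces to showing that the cluster weight arising in $\mu_2$ equals the cluster weight defining $\mu_1$.

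The heart of the argument is therefore a purely topological identity: I would show that
\[
\im\paren{\phi^*_{(P\cup\xi_1)_{\xi_2}}} = \im\paren{\phi^*_{P_{\xi_1 \cup \xi_2}}}
\]
as subquotients of the appropriate cohomology group, or at least that they have equal cardinality. This is where I expect the real content to lie, and it hinges on recognizing that the two \emph{target} complexes coincide: $(P \cup \xi_1)_{\xi_2}$ and $P_{\xi_1 \cup \xi_2}$ are literally the same subcomplex of $\Z^d$, namely $P \cup (\xi_1 \cap (\Z^d \setminus r_1)) \cup (\xi_2 \cap (\Z^d \setminus r_2)) \cup (\Z^d)^{(i-1)}$ together with the $(i-1)$-skeleton. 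I would need to check carefully that $\xi_1$, supported on $r_2 \setminus r_1$, lands outside $r_1$ and so is incorporated identically whether we absorb it into $P$ first or into the boundary condition. Once the target complexes and the inclusion maps of $P$ into them are identified, the two image groups are equal on the nose and the cardinalities match, completing the comparison. For the wired case I would run the identical reduction using $\phi_*^{\mathrm{BM}}$ in place of $\phi^*$; the conditioning argument is unchanged, and the topological identity $\im((\phi^{\mathrm{BM}}_*)_{(P\cup\xi_1)_{\overline{\xi_2}}}) = \im((\phi^{\mathrm{BM}}_*)_{P_{\overline{\xi_1 \cup \xi_2}}})$ again follows from the equality of the two Borel--Moore target complexes. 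The main obstacle will be bookkeeping: verifying that the skeleton-completion and the placement of $\xi_1$ relative to $r_1$ and $r_2$ make the two constructions of $P_\xi$ agree as subcomplexes, so that no subtlety about where the $(i-1)$-cells live or about double-counting the plaquettes of $\xi_1$ slips through.
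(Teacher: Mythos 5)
There is a genuine gap at the heart of your argument. You correctly identify that the two ambient complexes coincide, i.e.\ $(P\cup\xi_1)_{\xi_2}=P_{\xi_1\cup\xi_2}$ as subcomplexes of $\Z^d$ (call it $Y$), but you then conclude that ``the two image groups are equal on the nose.'' They are not, because the two restriction maps have different \emph{codomains}: the weight appearing in $\mu_2$ is $\abs{\im a}$ for $a:H^{i-1}\paren{Y;\;\Z_q}\to H^{i-1}\paren{P\cup\xi_1;\;\Z_q}$, whereas the weight defining $\mu_1$ is $\abs{\im b}$ for $b:H^{i-1}\paren{Y;\;\Z_q}\to H^{i-1}\paren{P;\;\Z_q}$. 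Since $b=c\circ a$ where $c$ is restriction from $P\cup\xi_1$ to $P$, one has $\im b=c\paren{\im a}$ and hence $\abs{\im a}=\abs{\im b}\cdot\abs{\ker\paren{c|_{\im a}}}$; the two weights differ by this kernel factor, which a priori depends on $P$. Your earlier sentences set up the map for $\mu_2$ correctly as the inclusion $P\cup\xi_1\hookrightarrow Y$, but the ``topological identity'' step silently replaces it by the inclusion of $P$ alone, which is where the argument breaks.

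The missing content is precisely to show that this discrepancy factor is a constant independent of the state of $P$. The paper does this by first reducing to free boundary conditions $\xi_2=\emptyset$ via Proposition~\ref{prop:largeboundary} (so that $a$ is surjective and the factor becomes $\abs{\ker c}$), and then applying the long exact sequence of the pair $\paren{P\cup\xi_1,P}$ to identify $\ker c\cong H^{i-1}\paren{P\cup\xi_1,P;\;\Z_q}$. That relative group is computed from relative cochains in degrees $i-2$ and $i-1$, which see only the $(i-1)$-skeleta and hence do not depend on which $i$-plaquettes of $r_1$ are open. Without this step (or an equivalent argument handling $\abs{\im a}/\abs{\im b}$ directly for general $\xi_2$), the comparison of weights does not go through. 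The wired case in the paper is then handled by a further application of Proposition~\ref{prop:largeboundary} to reduce to the non-wired case, rather than by rerunning the argument in Borel--Moore homology as you propose; your plan for that half inherits the same gap.
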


\begin{proof}
 We first prove the equality $\mu_1 \stackrel{d}{=} \mu_2.$ As a preliminary step, we reduce to the case where the second set of boundary conditions are free. By Proposition~\ref{prop:largeboundary} we can replace $\xi_2$ with boundary conditions $\xi_2'$ that do not contain any plaquettes outside of a larger box $r_3.$ Set $\mu_3=\mu_{r_3,p,q,i}^{\mathbf{f}}.$
Assuming the result for free boundary conditions, we have that 
\[\mu_1 \stackrel{d}{=}\paren{\mu_3 \Big|_{r_1} \middle \| A\paren{\xi_1 \cup \xi_2'}} \]
and
\[\mu_2\stackrel{d}{=}\paren{\paren{\mu_3 \middle \| A\paren{\xi_2'}}\Big|_{r_2} \middle \| A\paren{\xi_1}}\Big|_{r_1}\,,\]
which coincide.

Now, assume that $\xi_2=\emptyset.$ Let $P_1$ be a subcomplex of $r_1,$ let $P_2$ (respectively, $P_3$) be the percolation subcomplexes of $r_2$ (respectively, $\Z^d$) containing all open plaquettes of $P_1$ and $\xi_1.$ $P_2$ and $P_3$ thus have the same $i$-plaquettes, but different $(i-1)$-skeleta. For $1\leq j < k \leq 3$ let $\phi_{k,j}^*:H^{i-1}\paren{P_k;\;\Z_q}\rightarrow H^{i-1}\paren{P_j;\;\Z_q}$ be the map on cohomology induced by the inclusion $\phi^{k,j}:P_j\hookrightarrow P_k.$ $\phi_{3,2}^*$ is surjective and $\phi_{3,1}^*=\phi_{3,2}^* \circ \phi_{2,1}^*$ so
\[\abs{\im\phi_{3,1}^*}=\abs{\im \phi_{2,1}^*}\,.\]

It follows that that 
\[\mu_1\paren{P_1}\propto p^{\abs{P_1}}\paren{1-p}^{\abs{P_1^c}} \abs{\im \phi^{1,3}_*}=p^{\abs{P_1}}\paren{1-p}^{\abs{P_1^c}} \abs{\im \phi_{2,1}^*}\]
 and 
 \begin{align*}
  \mu_2\paren{P_1}=\mu_{r_2,p,q,i}^{\mathbf{f}}\paren{P_2}&\propto  p^{\abs{P_2}}\paren{1-p}^{\abs{P_2^c}}\abs{\im \phi_{3,2}^*}\\
  &\propto p^{\abs{P_1}}\paren{1-p}^{\abs{P_1^c}} \abs{H^{i-1}\paren{P_2;\;\Z_q}} \,
 \end{align*}
 where we have removed a factor that does not depend on $P_1.$

 It suffices to show that 
 \begin{equation}\label{eq:subcomplexcohomology}
     \abs{H^{i-1}\paren{P_2;\;\Z_q}}/ \abs{\im \phi^{2,1}_*}
 \end{equation}
 does not depend on the state of $P_1.$  Towards that end, we apply the long exact sequence of the pair $(P_2,P_1)$ (see page 199 of~\cite{hatcher2002algebraic}; more detail is given for the homological analogue on page 115): 
 \[\begin{tikzcd}
  0 \arrow{r}{} &  H^{i-1}\paren{P_2,P_1;\;\Z_q} \arrow{r}{\chi} & H^{i-1}\paren{P_2;\;\Z_q} \arrow{r}{\phi_{2,1}^*}  & H^{i-1}\paren{P_1;\;\Z_q}\,. 
 \end{tikzcd}
 \]
  The leftmost term --- corresponding to to $H^{i-2}\paren{P_1;\;\Z_q}$ --- vanishes because $P_1$ is a percolation subcomplexes. 
By exactness, $\chi$ is injective and 
 \[\ker\paren{\phi_{2,1}^*}=\im \chi \cong H^{i-1}\paren{P_2,P_1;\;\Z_q}\,.\]

 We claim that $H^{i-1}\paren{P_2,P_1;\;\Z_q}$ does not depend on the states of $i$-plaquettes of $P_1.$ Recall that $C^j\paren{P_2,P_1;\;\Z_q}$ is  is the group of $j$-cochains of $P_2$ that vanish on chains supported on $P_1,$ the relative coboundary map
\[\delta^{j}_{P_2,P_1}: C^{j}\paren{P_2,P_1;\;\Z_q} \to C^{j+1}\paren{P_2,P_1;\;\Z_q}\]
is the restriction of the usual coboundary map, and 
\[H^{i-1}\paren{P_2,P_1;\;\Z_q}=\ker \delta^{i-1}_{P_2,P_1}/\im \delta^{i-2}_{P_2,P_1}\,.\]
The $(i-1)$-skeletons of $P_2$ and $P_1$ do not depend on the state of their $i$-plaquettes, so neither do $C^{i-1}\paren{P_2,P_1}$ nor $C^{i-2}\paren{P_2,P_1}.$ It follows that $\im \delta^{i-2}_{P_2,P_1}$ and $\ker \delta^{i-1}_{P_2,P_1}$ are also independent of $P_1$ (changing the codomain of a map does not change its kernel), and thus $H^{i-1}\paren{P_2,P_1;\;\Z_q}$ is as well, completing the proof of the first statement.  

We use Proposition~\ref{prop:largeboundary} as a shortcut to prove that $\overline{\mu}_1 \stackrel{d}{=} \overline{\mu}_2.$ Choose a box $r_3\supset r_2$ large enough so that  
$$\mu_{r_2 ,p,q,i}^{\overline{\xi}_2} \stackrel{d}{=} \mu_{r_2 ,p,q,i}^{\hat{\xi_2}_{r_3}}$$
and
$$\mu_{r_1 ,p,q,i}^{\overline{\xi}_1} \stackrel{d}{=} \mu_{r_1 ,p,q,i}^{\hat{\xi_1\cup\xi_2}_{r_3}}.$$
Since $\hat{\xi_1\cup\xi_2}_{r_3}=\xi_1\cup \hat{\xi_2},$ the desired statement follows from the one from non-wired boundary conditions. 
\end{proof}

\section{Duality and Boundary Conditions}
We now consider boundary conditions in dual complexes. Recall that complex that we have defined on $\Z^d$ has an associated dual complex $\paren{\Z^d}^{\bullet} \coloneqq \Z^d + \paren{1/2,1/2,\ldots,1/2}.$ Each $i$-cell of $\Z^d$ intersects exactly one $(d-i)$-cell of $\paren{\Z^d}^{\bullet},$ so an $i$-dimensional percolation complex comes with a complementary dual $(d-i)$-dimensional dual complex.
Let $r$ be a box, let $P$ be an $i$-dimensional percolation subcomplex of $r,$ and let $Q$ be the dual complex. Recall our convention that $r$ does not contain any boundary $i$-plaquettes and includes the entire $(i-1)$-skeleton. Also, $Q$ is a subcomplex of $\overline{r^{\bullet}},$ by which we mean that it is allowed to contain $(d-i)$-plaquettes in the boundary. For convenience, set $\overline{Q}=Q\cup \partial r^{\bullet}.$ We will explore the relationship between $H^{i-1}\paren{P;\;\Z_q}$ and $H^{d-i-1}\paren{\overline{Q};\;\Z_q}$ by expressing them both in terms of $H_{i-1}\paren{P;\;\Z}.$ 

We begin by recalling a few standard topological tools. Homology and cohomology groups with different coefficient groups are related by what is called the \emph{Universal Coefficient Theorem}, which has a version for homology and cohomology (Theorems 3A.3 and 3.2 in \cite{hatcher2002algebraic} respectively). The following is a consequence of the Universal Coefficient Theorem for Homology:
\begin{equation}
\label{eq:UCTH}
H_j\paren{P;G}\cong \paren{H_j\paren{P;\;\Z}\otimes G}\oplus \mathrm{Tor}\paren{H_{j-1}\paren{P;\;\Z},G}\,.
\end{equation}

Then if we write $H_j\paren{X;\;\Z}=\Z^{\mathbf{b}_j\paren{X}}\oplus T_j\paren{X},$ the Universal Coefficient Theorem for Cohomology yields that
\begin{equation}
    \label{eq:UCTCZ}
  H^j\paren{X;\;\Z}\cong \paren{\paren{H_j\paren{X;\;\Z},\Z}/T_j\paren{X}}\oplus T_{j-1}\paren{X}\,
\end{equation}
(this is stated as Corollary 3.3 in~\cite{hatcher2002algebraic}).

We also use a formulation of Alexander duality in percolation complexes previously given in~\cite{duncan2023sharp}.

\begin{Proposition}
\label{cor:alexander}
Fix $0<i<d$ and a box $r$ in $\Z^d.$ If $P$ is a percolation subcomplex of $\overline{r}$ ($r$), $Q$ is the dual complex, and $r'$ is the box $r^{\bullet}$ (respectively $\overline{r^{\bullet}}$) then there is an isomorphism
 \[\mathcal{I} : H_{i}\paren{P_r;\;\Z} \to H^{d-i-1}\paren{Q\cup \partial r';\; \Z}\]
 where $H_j\paren{X;\;\Z}$ and $H^j\paren{X;\;\Z}$ denote the $j$-dimensional reduced homology and the $j$-dimensional reduced cohomology of $X$ with integral coefficients.
\end{Proposition}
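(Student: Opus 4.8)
The plan is to deduce the proposition from two classical ingredients: the long exact sequence of a pair together with the contractibility of the ambient box, and the purely combinatorial duality between the product cell structure on $\Z^d$ and that on $\paren{\Z^d}^{\bullet}.$ Throughout I read $H_i\paren{P_r;\Z}$ as the reduced homology $\tilde H_i\paren{P}$ of $P$ regarded as a subcomplex of the box, in keeping with the statement's conventions.

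First I would record the cellular duality that drives everything. Each $k$-cell $\sigma$ of $\Z^d$ meets a unique $(d-k)$-cell $\sigma^{\bullet}$ of $\paren{\Z^d}^{\bullet}$ transversally in a point, and $\sigma$ is a codimension-one face of a $(k+1)$-cell $\tau$ precisely when $\tau^{\bullet}$ is a codimension-one face of $\sigma^{\bullet}.$ Hence $\sigma \mapsto \sigma^{\bullet}$ extends to isomorphisms of graded groups identifying $C_k$ of a region with $C^{d-k}$ of the dual region and carrying the boundary map $\partial_k$ to the coboundary map $\delta^{d-k},$ up to a sign fixed by a consistent orientation convention. This is the discrete form of Poincaré duality for the cubical structure.

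Second I would pass from $\tilde H_i\paren{P}$ to relative homology. Since $\overline{r}$ is a contractible $d$-ball, the reduced long exact sequence of the pair $\paren{\overline{r},P},$ in which $\tilde H_{i+1}\paren{\overline{r}}=\tilde H_i\paren{\overline{r}}=0,$ collapses to an isomorphism $\tilde H_i\paren{P}\cong H_{i+1}\paren{\overline{r},P}.$ The relative chain group $C_k\paren{\overline{r},P}$ is free on the $k$-cells of $\overline{r}$ not lying in $P$: it vanishes for $k\leq i-1,$ is generated for $k=i$ by the closed $i$-cells (those absent from $P$), and equals $C_k\paren{\overline{r}}$ for $k\geq i+1.$ Applying the cellular duality of the previous step, the closed $i$-cells map exactly to the $(d-i)$-cells of $Q,$ the higher cells of $\overline{r}$ map to the lower-dimensional cells of the dual box, and the relative boundary map becomes the coboundary map on the resulting dual cochains. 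The degree bookkeeping $d-(i+1)=d-i-1$ then gives $H_{i+1}\paren{\overline{r},P}\cong H^{d-i-1}\paren{Q\cup\partial r';\Z},$ and composing the two isomorphisms produces the map $\mathcal I.$

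The delicate point — and the step I expect to cost the most care — is the treatment of the cells on the geometric boundary of the box, since these control both the appearance of the $\partial r'$ term and the passage between reduced and unreduced groups. Cells of $\overline{r}$ meeting $\partial r$ dualize to cells on or beyond $\partial r',$ and it is precisely the complementary convention ``$P$ open at the boundary versus $Q$ closed at the boundary'' that yields the two cases of the statement ($P\subset\overline{r}$ with $r'=r^{\bullet},$ and $P\subset r$ with $r'=\overline{r^{\bullet}}$). The augmentation responsible for reduced (co)homology must also be matched against the single top-degree discrepancy on the dual side. I would control this uniformly by treating $\overline{r}/\partial r\cong S^d$ as the genuine ambient sphere, so that the boundary cells are bookkept once and for all, and then check directly that the dual complex obtained in Step three is $Q\cup\partial r'$ under each of the two conventions; Lefschetz duality for the $d$-ball $\overline{r}$ provides a convenient consistency check on the degree shift and the orientation signs.
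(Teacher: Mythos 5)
The paper itself does not prove this proposition; it is stated twice and imported without proof from~\cite{duncan2023sharp}, where the argument goes through classical Alexander duality in a sphere together with a deformation retraction of the complement of $P$ onto the dual complex $Q\cup\partial r'.$ Your route --- the long exact sequence of a pair plus chain-level Poincar\'e--Lefschetz duality for the cubical structure --- is a genuinely different and in principle viable way to obtain the statement, and it has the appeal of staying entirely combinatorial. But as written it has one notational error and one substantive gap, and the gap sits exactly at the point you yourself identify as delicate.

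The notational error: in this paper $\overline{r}$ denotes the union of the cells of $r$ of dimension at most $i,$ i.e.\ the full $i$-skeleton of the box. It is not a contractible $d$-ball (for $i<d$ it has large $H_i$), and it has no $(i+1)$-cells, so $H_{i+1}\paren{\overline{r},P}$ computed from the pair you name is zero and the collapse of the long exact sequence fails as stated. You must instead take the pair $\paren{R,P}$ with $R$ the solid $d$-dimensional box; with that repair, $\tilde H_i\paren{P}\cong H_{i+1}\paren{R,P}$ is fine for $i\geq 1.$ The substantive gap is the boundary bookkeeping. For $k\geq i+1$ the relative chain group $C_k\paren{R,P}$ is free on \emph{all} $k$-cells of $R,$ including those contained in $\partial R,$ and for $i\leq d-2$ the duals of such boundary cells are $(d-k)$-cells centered on $\partial R$ that protrude outside $r^{\bullet}$ and are not cells of $Q\cup\partial r'.$ Hence the dualized relative chain complex is not the cochain complex of $Q\cup\partial r'$ but of a strictly larger complex, and one still has to prove that the two have the same $H^{d-i-1}$ (say by collapsing the protruding cells onto $\partial r'$, or by comparing both to the complement of $P$ in a sphere). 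That verification is precisely where the $\partial r'$ term, the distinction between the two cases ($P\subset\overline{r}$ with $r'=r^{\bullet}$ versus $P\subset r$ with $r'=\overline{r^{\bullet}}$), and the reduced/unreduced discrepancy when $d-i-1=0$ are all decided; deferring it to ``check directly'' leaves the proof incomplete at its only nontrivial step. Passing to $R/\partial R\cong S^d$ is a reasonable way to organize the computation, but it replaces the pair $\paren{R,P}$ by something like $\paren{R,P\cup\partial R},$ which changes the homology group on the left, so the bookkeeping does not disappear --- it moves.
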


Combining these facts yields the following proposition.

\begin{Proposition}\label{prop:torsion}
Let $q\in \N+1.$ Then
\[H^{i-1}\paren{P;\;\Z_q}\cong \Z_q^{\mathbf{b}_{i-1}\paren{P;\;\Z}} \oplus \mathrm{Tor}\paren{H_{i-1}\paren{P;\;\Z}, \Z_q}\,.\]
and 
\[\tilde{H}^{d-i-1}\paren{\overline{Q};\;\Z_q}\cong \Z_q^{\mathbf{b}_{i}\paren{P;\;\Z}} \oplus \mathrm{Tor}\paren{H_{i-1}\paren{P;\;\Z}, \Z_q}\,.\]
In particular, there is a constant $c=c\paren{N,i,d}$  so that
\begin{equation}
\label{eq:EP}
\abs{H^{i}\paren{P;\;\Z_q}}=\abs{H^{d-i-1}\paren{\overline{Q};\;\Z_q}}q^{c-\abs{P}}\,.
\end{equation}
\end{Proposition}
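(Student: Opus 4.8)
The plan is to establish the two isomorphisms first and then read off the cardinality identity~(\ref{eq:EP}) by a counting argument. Throughout I will use the Universal Coefficient Theorem for cohomology in split form, $H^{n}\paren{Y;\;\Z_q}\cong \Hom\paren{H_n\paren{Y;\;\Z},\Z_q}\oplus \mathrm{Ext}\paren{H_{n-1}\paren{Y;\;\Z},\Z_q}$, together with two elementary facts: for a finite abelian group $T$ one has $\Hom\paren{T,\Z_q}\cong \mathrm{Tor}\paren{T,\Z_q}$ (both split along cyclic summands into copies of $\Z_{\gcd\paren{\cdot,q}}$), and $\mathrm{Ext}\paren{\Z,\Z_q}=\mathrm{Tor}\paren{\Z,\Z_q}=0$. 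For the first isomorphism the only extra input is that $H_{i-2}\paren{P;\;\Z}$ is free: since $P$ contains the full $(i-1)$-skeleton of the box and its $i$-cells do not affect homology below degree $i-1$, one has $H_j\paren{P;\;\Z}\cong H_j\paren{r;\;\Z}$ for $j\le i-2$, which is free (this is the vanishing behind Proposition~\ref{prop:UCTC}). Applying UCT in degree $i-1$ then kills the Ext term, and splitting $H_{i-1}\paren{P;\;\Z}=\Z^{\mathbf{b}_{i-1}\paren{P}}\oplus T_{i-1}\paren{P}$ gives $H^{i-1}\paren{P;\;\Z_q}\cong \Z_q^{\mathbf{b}_{i-1}\paren{P}}\oplus \Hom\paren{T_{i-1}\paren{P},\Z_q}$; replacing $\Hom$ by $\mathrm{Tor}$ is the first display.

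For the second isomorphism I transport everything back to $P$ using the integral Alexander duality of Proposition~\ref{cor:alexander}, applied in two adjacent degrees. In degree $i$ it gives $\tilde H^{d-i-1}\paren{\overline{Q};\;\Z}\cong \tilde H_i\paren{P;\;\Z}$; the right-hand side is the top homology of the $i$-dimensional complex $P$, hence free of rank $\mathbf{b}_i\paren{P}$. Feeding this through the integral UCT~(\ref{eq:UCTCZ}) forces $T_{d-i-2}\paren{\overline{Q}}=0$ and identifies the free rank $\mathbf{b}_{d-i-1}\paren{\overline{Q}}=\mathbf{b}_i\paren{P}$. In degree $i-1$ the duality gives $\tilde H_{i-1}\paren{P;\;\Z}\cong \tilde H^{d-i}\paren{\overline{Q};\;\Z}$, whose torsion part (again by~(\ref{eq:UCTCZ})) is $T_{d-i-1}\paren{\overline{Q}}$, so comparing torsion subgroups yields $T_{d-i-1}\paren{\overline{Q}}\cong T_{i-1}\paren{P}$. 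Now UCT in degree $d-i-1$ applied to $\overline{Q}$ has vanishing Ext term, giving $\tilde H^{d-i-1}\paren{\overline{Q};\;\Z_q}\cong \Z_q^{\mathbf{b}_i\paren{P}}\oplus \Hom\paren{T_{d-i-1}\paren{\overline{Q}},\Z_q}\cong \Z_q^{\mathbf{b}_i\paren{P}}\oplus \mathrm{Tor}\paren{H_{i-1}\paren{P;\;\Z},\Z_q}$, the second display.

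For~(\ref{eq:EP}) I take cardinalities in the two isomorphisms. The torsion factors $\abs{\mathrm{Tor}\paren{H_{i-1}\paren{P;\;\Z},\Z_q}}$ are identical and cancel, leaving a factor $q^{\mathbf{b}_{i-1}\paren{P}-\mathbf{b}_i\paren{P}}$ relating the two cohomology counts. It then remains to show $\mathbf{b}_{i-1}\paren{P}-\mathbf{b}_i\paren{P}=c-\abs{P}$ for a constant depending only on $N,i,d$, which is the Euler--Poincar\'e relation: writing the Euler characteristic of $P$ both as the alternating sum of Betti numbers and as the alternating sum of cell counts, and using that all cells of dimension $\le i-1$, and hence all $\mathbf{b}_k\paren{P}$ with $k\le i-2$, are fixed by the box, the only $P$-dependent quantities are $\mathbf{b}_{i-1}\paren{P}$, $\mathbf{b}_i\paren{P}$, and the $i$-cell count $\abs{P}$; solving the resulting identity isolates $\mathbf{b}_{i-1}\paren{P}-\mathbf{b}_i\paren{P}=c-\abs{P}$ with $c=c\paren{N,i,d}$. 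Any discrepancy between reduced and unreduced cohomology contributes at most a fixed power of $q$, which is absorbed into $c$.

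The step I expect to be the main obstacle is the second isomorphism, and precisely the need to invoke Alexander duality in two degrees rather than one. The single-degree statement of Proposition~\ref{cor:alexander} does not determine $\tilde H^{d-i-1}\paren{\overline{Q};\;\Z_q}$ on its own: this $\Z_q$-cohomology group depends on the integral homology of $\overline{Q}$ in both degrees $d-i-1$ and $d-i-2$, whereas the degree-$(d-i-1)$ duality only controls the free rank in degree $d-i-1$ and the torsion in degree $d-i-2$. Capturing the torsion $T_{d-i-1}\paren{\overline{Q})$ requires the duality one degree lower, so I must justify that the Alexander isomorphism of Proposition~\ref{cor:alexander}, as an instance of Poincar\'e--Alexander--Lefschetz duality for the complement of $P$ in $S^d$, holds simultaneously in the adjacent degree $i-1$.
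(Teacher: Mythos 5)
Your proof is correct and uses the same three ingredients as the paper (the Universal Coefficient Theorem, Alexander duality, and the Euler--Poincar\'e relation); the only real difference is how the second isomorphism is packaged, and it sits exactly at the step you flag as the main obstacle. You recover the free rank and the torsion of $H_{d-i-1}\paren{\overline{Q};\;\Z}$ from two separate applications of the duality, in degrees $i$ and $i-1$ on the $P$ side. The paper instead passes to $\Z_q$-homology via Proposition~\ref{prop:UCTC}, writes $H_{d-i-1}\paren{\overline{Q};\;\Z_q}\cong H_{d-i-1}\paren{\overline{Q};\;\Z}\otimes\Z_q$ (the Tor term dying because $\overline{Q}$ contains the full $(d-i-1)$-skeleton), and then applies the duality once in the transposed form $H_{d-i-1}\paren{\overline{Q};\;\Z}\cong H^{i}\paren{P;\;\Z}$; by~(\ref{eq:UCTCZ}) the integral cohomology group $H^{i}\paren{P;\;\Z}\cong\Z^{\mathbf{b}_{i}\paren{P}}\oplus T_{i-1}\paren{P}$ already bundles the degree-$i$ free rank with the degree-$(i-1)$ torsion, so tensoring with $\Z_q$ yields the second display in one stroke. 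This does not actually evade your concern: the transposed, degree-shifted application is no more literally an instance of the stated Proposition~\ref{cor:alexander} than your degree-$(i-1)$ one is, so both arguments rest on the Alexander duality of~\cite{duncan2023sharp} holding in all degrees, which it does; your two-degree route and the paper's integral-cohomology route are equivalent ways of extracting the same information. Your first isomorphism (cohomological UCT with the Ext term killed by freeness of $H_{i-2}$) and your Euler--Poincar\'e bookkeeping for~(\ref{eq:EP}) match the paper's argument.
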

\begin{proof}
For the first claim,
\begin{align*}
    H^{i-1}\paren{P;\;\Z_q}&\cong  H_{i-1}\paren{P;\;\Z_q} &&\text{Proposition~\ref{prop:UCTC}}\\
    &\cong H_{i-1}\paren{P;\;\Z}\otimes \Z_q && \text{(\ref{eq:UCTH}), $H_{i-2}\paren{P;\;\Z}\cong 0$}\\
    &\cong \Z_q^{\mathbf{b}_{i-1}\paren{P;\;\Z}} \oplus \mathrm{Tor}\paren{H_{i-1}\paren{P;\;\Z}, \Z_q}  && \text{properties of $\otimes$}\,.
\end{align*}

We now demonstrate the second claim.
\begin{align*}
    H^{d-i-1}\paren{\overline{Q};\;\Z_q}&\cong  H_{d-i-1}\paren{\overline{Q};\;\Z_q} &&\text{Proposition~\ref{prop:UCTC}}\\
    &\cong \paren{H_{d-i-1}\paren{\overline{Q};\;\Z}\otimes \Z_q} && \text{(\ref{eq:UCTH}), $H_{d-i-2}\paren{\overline{Q};\;\Z}\cong 0$}\\
    &\cong \paren{H^{i}\paren{P;\;\Z}\otimes \Z_q} &&\text{Corollary~\ref{cor:alexander}}\\
    &\cong \Z_q^{\mathbf{b}_{i}\paren{P;\;\Z}} \oplus \mathrm{Tor}\paren{H_{i-1}\paren{P;\;\Z}, \Z_q} &&\text{(\ref{eq:UCTCZ}), properties of $\otimes$}\,.
\end{align*}

Finally,  $H_0\paren{P;\;\Z}\cong \Z,$ $H_i\paren{P;\;\Z},$ and  $H_{i-1}\paren{P;\;\Z}$ are the only non-zero homology groups of $P,$ so the Euler--Poincar\'{e} theorem (Theorem 2.44 in~\cite{hatcher2002algebraic}) yields that
\[\chi\paren{P}=1+(-1)^{i-1} \mathbf{b}_{i-1}\paren{P;\;\Z}+(-1)^i\mathbf{b}_{i}\paren{P;\;\Z}=\abs{P}+\sum_{j=0}^{i-1}\abs{P^{(j)}}\,.\]
Then (\ref{eq:EP}) follows because the number of $j$-dimensional plaquettes in $P$ ($\abs{P^{(j)}}$) does not depend on $P$ for $j\leq i-1.$ 
\end{proof}

The following special case of Alexander duality is reproduced from~\cite{duncan2023sharp}.

\begin{Proposition}
\label{prop:alexander}
Fix $0<i<d$ and a box $r$ in $\Z^d.$ If $P$ is a percolation subcomplex of $\overline{r}$ ($r$), $Q$ is the dual complex, and $r'$ is the box $r^{\bullet}$ (respectively $\overline{r^{\bullet}}$) then there is an isomorphism
 \[\mathcal{I} : H_{i}\paren{P_r;\;\Z} \to H^{d-i-1}\paren{Q\cup \partial r';\; \Z}\]
 where $H_j\paren{X;\;\Z}$ and $H^j\paren{X;\;\Z}$ denote the $j$-dimensional reduced homology and the $j$-dimensional reduced cohomology of $X$ with integral coefficients.
\end{Proposition}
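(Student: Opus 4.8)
The plan is to deduce this combinatorial statement from classical Alexander duality in the sphere $S^d$. The degree shift from $i$ to $d-i-1$ — so that the two degrees sum to $d-1$ rather than $d$ — is exactly the Alexander shift, which suggests working in $S^d = \R^d \cup \set{\infty}$ rather than attempting a Lefschetz-type argument directly on the box. Recall the classical statement: for a compact, locally contractible, nonempty proper subspace $K \subset S^d,$ there is a natural isomorphism $\tilde{H}_i\paren{K;\;\Z} \cong \tilde{H}^{d-i-1}\paren{S^d \setminus K;\;\Z}.$ I would apply this with $K = \abs{P},$ the geometric realization of the percolation subcomplex, which is compact and locally contractible as a finite CW complex, and is a nonempty proper subspace since $0 < i < d.$

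The geometric heart of the argument is to identify the open complement $S^d \setminus \abs{P},$ up to homotopy, with the dual complex $Q \cup \partial r'.$ Here I would use the cellular duality $\sigma \leftrightarrow \sigma^{\bullet}$ that matches each $i$-cell of the lattice with the unique $(d-i)$-cell of $\paren{\Z^d}^{\bullet}$ it meets. Away from $\abs{P},$ the box deformation retracts onto the union of the dual cells of the plaquettes \emph{absent} from $P,$ which is precisely $Q$; this is the standard thickening retraction that pushes the open $i$-cells off onto the dual skeleton. The role of $\partial r'$ is to absorb the unbounded region: everything outside the box together with the point at infinity forms a single contractible piece that the retraction collapses onto the boundary of the dual box, so that $S^d \setminus \abs{P} \simeq Q \cup \partial r'.$ The two parallel cases — $P \subset \overline{r}$ with $r' = r^{\bullet},$ versus $P \subset r$ with $r' = \overline{r^{\bullet}}$ — differ only in whether boundary plaquettes and their duals are included, and are handled by the same retraction with the boundary bookkeeping swapped.

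Granting this identification, the proof assembles quickly. Alexander duality gives $\tilde{H}_i\paren{\abs{P};\;\Z} \cong \tilde{H}^{d-i-1}\paren{S^d \setminus \abs{P};\;\Z},$ and transporting the right-hand side along the homotopy equivalence $S^d \setminus \abs{P} \simeq Q \cup \partial r'$ yields the desired $\mathcal{I} : H_i\paren{P;\;\Z} \to H^{d-i-1}\paren{Q \cup \partial r';\;\Z}$ with both groups reduced, as in the statement. Working with reduced theories throughout is what lets the ambient $S^d$ disappear cleanly and is the reason the isomorphism must be stated in reduced form. As a more algebraic alternative to the geometric retraction, one can instead verify the identification at the chain level: the correspondence $\sigma \leftrightarrow \sigma^{\bullet}$ intertwines $\partial$ with $\delta,$ so it realizes the complement's cochain complex as the dual of the chain complex of $\abs{P}$ directly, which is the combinatorial shadow of the same statement.

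The step I expect to be the main obstacle is the homotopy identification of the complement in the second paragraph — specifically, making the deformation retraction of $S^d \setminus \abs{P}$ onto $Q \cup \partial r'$ rigorous while simultaneously tracking the two boundary conventions and the contribution of the point at infinity. Verifying that the retraction sends the unbounded complementary region exactly onto $\partial r'$ (and not onto a larger or smaller subcomplex) is delicate, and it is precisely here that the hypothesis $0 < i < d$ and the careful definitions of $r,$ $\overline{r},$ and the dual box do the real work. Once the complement is correctly identified, naturality of Alexander duality makes the remaining verification routine.
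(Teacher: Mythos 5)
The paper itself does not prove this proposition; it is reproduced verbatim from \cite{duncan2023sharp}, so your argument can only be judged on its own merits. Your overall strategy --- classical Alexander duality in $S^d$ followed by a combinatorial identification of the complement --- is the right one and matches the standard proof. But the step you yourself flag as the main obstacle is not merely delicate: as stated, it is false. The complement $S^d \setminus \abs{P}$ is \emph{not} homotopy equivalent to $Q \cup \partial r'.$ The exterior of the dual box together with $\infty$ is a closed $d$-ball, and a ball does not deformation retract onto its boundary sphere, so the retraction you describe (``collapses onto the boundary of the dual box'') cannot exist. A concrete counterexample: take $d=2,$ $i=1,$ $r=[0,1]^2,$ and $P$ consisting of the four vertices and no edges. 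Then $S^2\setminus\abs{P}$ is a sphere minus four points, a wedge of three circles, while $Q\cup\partial r^{\bullet}$ is the boundary square of $[-1/2,3/2]^2$ with four spokes to the center, a wedge of four circles. The two spaces agree in reduced $H^0$ (the degree $d-i-1$ the proposition actually asserts) but differ in $H^1,$ so no homotopy equivalence is possible.

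The correct identification is that $S^d\setminus\abs{P}$ deformation retracts onto $Q\cup E,$ where $E$ is the closed exterior ball with $\partial E=\partial r'.$ One then passes from $\tilde{H}^{d-i-1}\paren{Q\cup E}$ to $\tilde{H}^{d-i-1}\paren{Q\cup\partial r'}$ by comparing the long exact sequences of the pairs $\paren{Q\cup E,E}$ and $\paren{Q\cup\partial r',\partial r'}$ via excision: since $E$ is contractible, $\tilde{H}^{\ast}\paren{Q\cup E}\cong H^{\ast}\paren{Q\cup E,E}\cong H^{\ast}\paren{Q\cup\partial r',\partial r'},$ and since $\tilde{H}^{j}\paren{\partial r'}\cong\tilde{H}^{j}\paren{S^{d-1}}$ vanishes for $j<d-1,$ the relative group agrees with $\tilde{H}^{d-i-1}\paren{Q\cup\partial r'}$ precisely because $d-i-1\leq d-2$ when $i>0.$ This is where the hypothesis $0<i<d$ actually enters, and it is why the isomorphism holds only in the single degree $d-i-1$ rather than in all degrees as your homotopy equivalence would imply. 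Your chain-level alternative in the third paragraph suffers from the same omission: the correspondence $\sigma\leftrightarrow\sigma^{\bullet}$ does not by itself account for the boundary term $\partial r'.$
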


 We are now ready to prove that the $i$-dimensional PRCM with free boundary conditions on $r$ is dual to a $(d-i)$-dimensional wired PRCM on $\overline{r^{\bullet}}.$ The argument is similar to the one used in~\cite{duncan2022topological}, with the key difference being the use of the preceding proposition.
 
\begin{Theorem}\label{theorem:dualitygeneral}
Let $q\in \N+1$ and $1\leq i \leq d-1.$ Also, define
\begin{equation}
    \label{eq:pstar}
    p^*=p^*(p,q)= \frac{\paren{1-p}q}{\paren{1-p}q + p}\,.
\end{equation}
Then, if $r$ is a box in $\Z^d,$
\begin{equation*}
   \mu^{\mathbf{f}}_{r,p,q,i}\paren{P} = \mu^{\mathbf{w}}_{\overline{r^\bullet},p^*,q,d-i}\paren{Q}\
\end{equation*}
and
\begin{equation*}
   \mu^{\mathbf{f}}_{\overline{r},p,q,i}\paren{P} = \mu^{\mathbf{w}}_{r^\bullet,p^*,q,d-i}\paren{Q}\
\end{equation*}
\end{Theorem}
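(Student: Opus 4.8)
The plan is to unwind both measures into explicit weights, match them up plaquette-by-plaquette using the complementary duality $P\leftrightarrow Q$ (where $\sigma\in P\iff\sigma^\bullet\notin Q$), and then reduce the whole identity to the single algebraic computation that the definition of $p^*$ is engineered to satisfy. The only genuinely topological ingredient is Proposition~\ref{prop:torsion}; everything else is bookkeeping.

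First I would write out the two weights. Definition~\ref{def:prcm} gives $\mu^{\mathbf f}_{r,p,q,i}(P)\propto p^{\abs P}(1-p)^{M-\abs P}\abs{H^{i-1}(P;\Z_q)}$, where $M$ is the number of $i$-plaquettes available to $P$ in $r$. For the wired measure, the discussion following Definition~\ref{defn:boundaryconditions} identifies the cluster weight $\abs{\im\phi^*}$ on a box with $\abs{H^{d-i-1}(\overline Q;\Z_q)}$, where $\overline Q=Q\cup\partial r^\bullet$; hence $\mu^{\mathbf w}_{\overline{r^\bullet},p^*,q,d-i}(Q)\propto (p^*)^{\abs Q}(1-p^*)^{M-\abs Q}\abs{H^{d-i-1}(\overline Q;\Z_q)}$. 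Here I use that the complementary duality puts the available $i$-plaquettes of $r$ in bijection with the available $(d-i)$-plaquettes of $\overline{r^\bullet}$, so both models range over the same number $M$ of plaquettes and $\abs P+\abs Q=M$.

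The key substitution comes from Proposition~\ref{prop:torsion}: combining its two displayed isomorphisms with the Euler--Poincar\'e computation gives $\abs{H^{i-1}(P;\Z_q)}=\abs{H^{d-i-1}(\overline Q;\Z_q)}\,q^{c-\abs P}$ for a constant $c$ independent of $P$. Inserting this together with $\abs Q=M-\abs P$ into the wired weight and discarding every factor that does not depend on $P$ (namely $q^{-c}$, $(p^*)^M$, and on the free side $(1-p)^M$), both measures become proportional to $\abs{H^{i-1}(P;\Z_q)}$ times an explicit geometric factor raised to the power $\abs P$. Equality then reduces to the one-line check that $\tfrac{p}{1-p}=\tfrac{(1-p^*)q}{p^*}$, which is immediate from~(\ref{eq:pstar}). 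Since both measures are probability measures and the index sets are identified by $P\leftrightarrow Q$, proportionality of the weights forces the two values to agree.

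The main obstacle — really the only place requiring care — is the cohomological bookkeeping: verifying that the wired cluster weight is genuinely $\abs{H^{d-i-1}(\overline Q;\Z_q)}$ with $\overline Q=Q\cup\partial r^\bullet$ exactly as it enters Proposition~\ref{prop:torsion}, and tracking the reduced-versus-unreduced discrepancy in degree zero, which appears precisely in the codimension-one case $i=d-1$; there the difference between $\tilde H^0$ and $H^0$ contributes one further fixed factor of $q$, which is harmless since it is absorbed into $c$. The second displayed identity, $\mu^{\mathbf f}_{\overline r,p,q,i}=\mu^{\mathbf w}_{r^\bullet,p^*,q,d-i}$, follows by the identical argument after swapping which box carries its boundary plaquettes, i.e.\ invoking the other case of Proposition~\ref{prop:alexander} and the corresponding case of Proposition~\ref{prop:torsion}; I would simply record that it holds \emph{mutatis mutandis}.
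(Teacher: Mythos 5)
Your proposal follows essentially the same route as the paper's proof: both unwind the free weight, substitute via Proposition~\ref{prop:torsion} (Equation~(\ref{eq:EP})) together with the complementarity $\abs{P}+\abs{Q}=M$, reduce to the algebraic identity $\tfrac{p}{1-p}=\tfrac{(1-p^*)q}{p^*}$ encoded in~(\ref{eq:pstar}), and identify the wired cluster weight with $\abs{H^{d-i-1}(\overline{Q};\Z_q)}$ using the discussion after Definition~\ref{defn:boundaryconditions}. Your remark about the reduced-versus-unreduced degree-zero factor being absorbed into the constant $c$ is a correct and worthwhile point of care that the paper handles only implicitly.
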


\begin{proof}
In order to show the first claim, we compute
\begin{align*}
  \mu^{\mathbf{f}}_{r,p,q,i}\paren{P} &=\frac{1}{\tilde{Z}}p^{\abs{P}}\paren{1-p}^{\abs{r^{(i)}}-\abs{P}}\abs{H^{i-1}\paren{P;\;\Z_q}}\\
    &= \frac{\paren{1-p}^{\abs{r^{(i)}}}}{\tilde{Z}}\paren{\frac{p}{1-p}}^{\abs{P}}\abs{H^{i-1}\paren{P;\;\Z_q}}\\
    &= \frac{\paren{1-p}^{\abs{r^{(i)}}}}{\tilde{Z}}\paren{\frac{p}{1-p}}^{\abs{P}}\abs{H^{d-i-1}\paren{\overline{Q};\;\Z_q}}q^{c-\abs{P}} &&\text{(\ref{eq:EP})}\\
    &= \frac{q^{c}\paren{1-p}^{\abs{r^{(i)}}}}{\tilde{Z}}\paren{\frac{q(1-p)}{p}}^{-\abs{P}}\abs{H^{d-i-1}\paren{\overline{Q};\;\Z_q}}\\
    &= \frac{q^{c}\paren{1-p}^{\abs{r^{(i)}}}}{\tilde{Z}}\paren{\frac{q(1-p)}{p}}^{\abs{\overline{Q}}-\abs{r^{(i)}}}\abs{H^{d-i-1}\paren{\overline{Q};\;\Z_q}}\\
    &= \frac{q^{c}\paren{1-p}^{\abs{r^{(i)}}}}{\tilde{Z}}\paren{\frac{p^*}{1-p^*}}^{\abs{\overline{Q}}-\abs{r^{(i)}}}\abs{H^{d-i-1}\paren{\overline{Q};\;\Z_q}} &&\text{(\ref{eq:pstar})}\\
    &= \frac{q^{c}\paren{1-p}^{\abs{r^{(i)}}}}{\paren{p^*}^{\abs{r^{(i)}}}\tilde{Z}}\frac{\paren{p^*}^{\abs{\overline{Q}}}}{\paren{1-p^*}^{\abs{\overline{Q}} - \abs{r^{(d-i)}}}}\abs{H^{d-i-1}\paren{\overline{Q};\;\Z_q}}\\
    &\coloneqq \frac{1}{\tilde{Z}^\bullet}\paren{p^*}^{\abs{\overline{Q}}}\paren{1-p^*}^{\abs{r^{(d-i)}}- \abs{Q}}\abs{H^{d-i-1}\paren{\overline{Q};\;\Z_q}}\\
    &= \mu^{\mathbf{w}}_{r^\bullet,p^*,q,d-i}\paren{Q}\,.
\end{align*}
The proof of the second claim is nearly identical, but uses the parenthetical formulation of Proposition~\ref{prop:alexander}.
\end{proof}

Since our notion of boundary conditions corresponds to a fixed external percolation complex, a natural set of dual boundary conditions is given by the dual complex. More precisely, for a box $r$ in $\Z^d$ and boundary conditions $\xi,$ the dual measure on $\overline{r^{\bullet}}$ is defined by setting $\xi^{\bullet}$ to include all $(d-i)$-plaquettes dual to closed $i$-plaquettes of $\xi.$ All of the pieces are now in place to describe the distribution of the dual complex to the PRCM with boundary conditions.

\begin{Theorem}\label{thm:mostgeneralduality}
   \[\mu^{\xi}_{r,p,G,i}\paren{P} = \mu^{\overline{\xi^\bullet}}_{\overline{r^{\bullet}},p^*,G,d-i}\paren{Q}\]
\end{Theorem}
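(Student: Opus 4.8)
The plan is to reduce the statement to the free--wired duality of Theorem~\ref{theorem:dualitygeneral} by realizing both sides as conditioned extremal measures on a common large box, using Proposition~\ref{prop:largeboundary} and Corollary~\ref{cor:boundarywelldefined}. The organizing principle is that under the primal--dual bijection $P \leftrightarrow Q$ an $i$-plaquette is open exactly when its dual $(d-i)$-plaquette is closed, so conditioning the primal configuration to agree with $\xi$ outside $r$ is literally the same event as conditioning the dual configuration to agree with $\xi^{\bullet}$ outside $r^{\bullet}.$

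First I would treat the primal measure. By Proposition~\ref{prop:largeboundary} there is a box $r' \supset r$ with $\mu^{\xi}_{r,p,q,i} \stackrel{d}{=} \mu^{\xi_{r'}}_{r,p,q,i},$ so that the boundary condition becomes the finitely supported $\xi_{r'} = \xi \cap r'.$ Corollary~\ref{cor:boundarywelldefined}, taking the outer boundary condition on $r'$ to be free, then gives
\[\mu^{\xi}_{r,p,q,i} \stackrel{d}{=} \paren{\mu^{\mathbf{f}}_{r',p,q,i} \middle\| A\paren{\xi_{r'}}}\big|_{r},\]
where $A\paren{\xi_{r'}}$ is the event that the configuration agrees with $\xi$ on $r' \setminus r.$

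Symmetrically, enlarging $r'$ if needed, Proposition~\ref{prop:largeboundary} lets me replace the wired-at-infinity dual condition by its wired finite approximation, $\mu^{\overline{\xi^{\bullet}}}_{\overline{r^{\bullet}},p^*,q,d-i} \stackrel{d}{=} \mu^{\hat{\xi^{\bullet}}_{r'^{\bullet}}}_{\overline{r^{\bullet}},p^*,q,d-i};$ it is precisely here that the wired (rather than free) approximation is forced, since the free primal boundary being ``closed at infinity'' dualizes to ``open at infinity,'' i.e.\ to the Borel--Moore cycles retained in Definition~\ref{defn:wiredboundaryconditions}. A second application of Corollary~\ref{cor:boundarywelldefined}, now extracting the box $r'^{\bullet}$ with wired outer boundary conditions, rewrites this as $\paren{\mu^{\mathbf{w}}_{\overline{r'^{\bullet}},p^*,q,d-i} \middle\| A\paren{\xi^{\bullet}}}\big|_{\overline{r^{\bullet}}}.$ It remains to apply Theorem~\ref{theorem:dualitygeneral}, which identifies $\mu^{\mathbf{f}}_{r',p,q,i}$ with $\mu^{\mathbf{w}}_{\overline{r'^{\bullet}},p^*,q,d-i}$ under $P \leftrightarrow Q,$ and to note that this bijection carries the event $A\paren{\xi_{r'}}$ to $A\paren{\xi^{\bullet}}$ and the restriction to $r$ to the restriction to $\overline{r^{\bullet}}.$ Chaining the three identifications proves the theorem.

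The principal difficulty I anticipate is bookkeeping rather than conceptual: keeping the bar conventions straight (free on $r'$ dualizing to wired on $\overline{r'^{\bullet}},$ and the box $r$ dualizing to $\overline{r^{\bullet}}$ with its boundary $(d-i)$-plaquettes included), and checking that a single $r'$ can be chosen large enough for both invocations of Proposition~\ref{prop:largeboundary}. The one genuinely substantive point is to verify that, under the Alexander-duality bijection of Theorem~\ref{theorem:dualitygeneral}, restricting the primal configuration to $r$ corresponds exactly to restricting the dual configuration to $\overline{r^{\bullet}}$ while the conditioning occurs on the complementary annulus $r' \setminus r;$ once the plaquette correspondence between $r' \setminus r$ and $r'^{\bullet} \setminus r^{\bullet}$ is pinned down, the open--closed duality makes $A\paren{\xi_{r'}}$ and $A\paren{\xi^{\bullet}}$ the same event and the two conditioned measures coincide.
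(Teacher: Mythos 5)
Your proposal is correct and follows essentially the same route as the paper: use Proposition~\ref{prop:largeboundary} together with Corollary~\ref{cor:boundarywelldefined} to realize the primal measure as a conditioned free measure on a large box and the dual measure as a conditioned wired measure on the dual box, then conclude with Theorem~\ref{theorem:dualitygeneral}. The paper's own proof is just a terser version of the same argument, stating the two conditioned-measure identities directly and leaving the plaquette bookkeeping implicit.
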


\begin{proof}
By Proposition~\ref{prop:largeboundary}, we can choose a box $r_2$ large enough so that so that 
\[\mu_{r ,p,G,i}^{\xi} \stackrel{d}{=} \paren{\mu_{r_2,p,G,i}^{\mathbf{f}} \middle \| A\paren{\xi}}\Big|_{r}\]
and 
\[\mu_{\overline{r^{\bullet}} ,p^*,G,d-i}^{\overline{\xi^{\bullet}}} \stackrel{d}{=} \paren{\mu_{\overline{r_2^{\bullet}},p^*,G,d-i}^{\mathbf{w}} \middle \| A\paren{\xi^{\bullet}}}\Big|_{\overline{r^{\bullet}}}\,.\]
We can conclude by applying Theorem~\ref{theorem:dualitygeneral}. 
\end{proof}

As a corollary, we have that $\abs{\im\phi^*}=\abs{\im \psi_*^{\mathrm{BM}}}.$ This identity can be proven directly using a commutative diagram involving long exact sequences of the pairs $(P_\xi,P)$ and $(\overline{Q}\cup \overline{Q_{\xi^{\bullet}}},\overline{Q})$ together with Poincar\'{e} and Lefschetz dualities. Then one could prove Proposition~\ref{prop:largeboundary} using the same argument as in Theorem~\ref{theorem:dualitygeneral}. However, the resulting proof would be longer.

\section{The Infinite Volume Limit}

In this section, we apply our previous results to understand the random complexes on $\Z^d$ constructed as weak limits of finite volume PRCMs with boundary conditions. First, we recall some basic tools to compare different measures. For two measures $\mu_1$ and $\mu_2$ on percolation subcomplexes, we say that $\mu_1$ is \emph{stochastically dominated} by $\mu_2$ and write $\mu_1 \leqst \mu_2$ if there is a coupling $\kappa$ of random complexes $P_1$ and $P_2$ distributed according to $\mu_1$ and $\mu_2$ respectively so that 
\[\kappa\paren{P_1 \subset P_2} = 1.\]

The main tool we will use to show stochastic domination is due to Holley~\cite{holley1974remarks}.

\begin{Theorem}[Holley's Inequality]\label{thm:holley}
Let $I$ be a finite index set and let 
\[\paren{X_i}_{i \in I},\paren{Y_i}_{i \in I} \in \set{0,1}^I\]
be random vectors distributed according to strictly positive probability measures $\mu_1$ and $\mu_2.$ Suppose that for each pair $\paren{W_i}_{i \in I},\paren{Z_i}_{i \in I} \in \set{0,1}^I$ with $W_i \leq Z_i$ for each $j \in I,$ 
\begin{align*}
    &\mu_1\paren{X_j = 1 : X_i = W_i \text{ for all } i \in I \setminus \set{j}}\\ 
    &\qquad\leq \mu_2\paren{Y_j = 1 :  Y_i = Z_i \text{ for all } i \in I \setminus \set{j}}\,.
\end{align*}
Then $\mu_1 \leqst \mu_2.$
\end{Theorem}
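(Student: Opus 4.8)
The plan is to realize the stochastic domination as an explicit monotone coupling, produced as the stationary distribution of a coupled single-site heat-bath (Glauber) dynamics run on the set of ordered pairs $S = \set{(\omega,\eta) \in \set{0,1}^I \times \set{0,1}^I : \omega \leq \eta}$. For a strictly positive measure $\mu$ on $\set{0,1}^I$, a site $j \in I$, and a configuration $\omega$, I write $\rho_j^{\mu}(\omega) = \mu\paren{X_j = 1 : X_i = \omega_i \text{ for } i \neq j}$ for the conditional probability appearing in the statement, so that the hypothesis reads exactly $\rho_j^{\mu_1}(W) \leq \rho_j^{\mu_2}(Z)$ whenever $W \leq Z$ coordinatewise. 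I would then build a continuous-time Markov chain on $S$ in which each site $j$ carries an independent rate-one clock.

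When the clock at $j$ rings in state $(\omega,\eta)$, I draw a single uniform random variable $U$ on $[0,1]$ and resample both coordinates at $j$ simultaneously: set the new first coordinate at $j$ to $1$ iff $U \leq \rho_j^{\mu_1}(\omega)$, and the new second coordinate at $j$ to $1$ iff $U \leq \rho_j^{\mu_2}(\eta)$, leaving all other sites fixed. The key point is that this dynamics preserves $S$: since $\omega \leq \eta$ forces $\rho_j^{\mu_1}(\omega) \leq \rho_j^{\mu_2}(\eta)$ by the Holley hypothesis, sharing the single uniform $U$ guarantees the new value of the first coordinate at $j$ is at most that of the second, while the other sites are unchanged and hence stay ordered. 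Moreover, the first-coordinate transitions depend only on $\omega$ (and symmetrically for $\eta$), so each marginal process is autonomously Markov and is precisely the heat-bath dynamics for $\mu_1$ (respectively $\mu_2$); these dynamics are reversible with respect to, and—by strict positivity, which keeps every single-site flip rate positive and hence makes the chain irreducible—have unique stationary distribution $\mu_1$ (respectively $\mu_2$).

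Finally, since $S$ is finite and invariant, the coupled chain admits a stationary distribution $\pi$ supported on $S$; its two marginals are stationary for the respective marginal chains, hence equal $\mu_1$ and $\mu_2$ by the uniqueness just noted. Thus $\pi$ is a coupling of $\mu_1$ and $\mu_2$ concentrated on $\set{\omega \leq \eta}$, which is exactly a witness for $\mu_1 \leqst \mu_2$. I expect the only real obstacle to be bookkeeping rather than conceptual: once the hypothesis is recognized as the monotonicity $\rho_j^{\mu_1}(\omega) \leq \rho_j^{\mu_2}(\eta)$ on ordered pairs, order preservation is immediate, so the substance lies in verifying at the level of generators that the common-$U$ resampling genuinely projects to the two heat-bath marginals and is the correct monotone coupling of the two conditional laws. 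Everything else—existence of $\pi$, irreducibility and reversibility of the marginal chains—is routine for strictly positive measures on a finite product space.
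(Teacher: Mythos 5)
The paper does not prove this statement at all: it is quoted as a known result and attributed directly to Holley's 1974 paper, so there is no in-text argument to compare yours against. That said, your proposal is a correct and complete proof, and it is essentially the classical one (Holley's original argument, reproduced in Grimmett's book and in Liggett): run the two single-site heat-bath dynamics simultaneously with a shared uniform variable at each update, observe that the hypothesis $\rho_j^{\mu_1}(W)\leq\rho_j^{\mu_2}(Z)$ for $W\leq Z$ is exactly what makes the ordered set $S=\set{\omega\leq\eta}$ invariant, and extract a monotone coupling from any stationary distribution of the joint chain on $S$. All the points that need checking are in place: the conditional probabilities in the hypothesis depend only on the coordinates off $j$, so the common-$U$ update genuinely projects onto the two autonomous heat-bath marginals; strict positivity gives irreducibility and hence uniqueness of the stationary measures $\mu_1$ and $\mu_2$ for those marginals; and finiteness of $S$ guarantees the joint chain has some stationary law supported on $S$, whose marginals must then be $\mu_1$ and $\mu_2$. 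That stationary law is the required coupling witnessing $\mu_1\leqst\mu_2$. No gaps.
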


We can now compare measures in subcomplexes in the free and wired cases.

\begin{Lemma}\label{lemma:freewiredmono}
    Let $X \subset Y$ be subcomplexes of $\Z^d.$ Then $\mu^{\mathbf{f}}_{X,p,G,i} \leqst \paren{\mu^{\mathbf{f}}_{Y,p,G,i}}\big|_{X}$
    and   $\paren{\mu^{\mathbf{w}}_{Y,p,G,i}}\big|_X \leqst \mu^{\mathbf{w}}_{X,p,G,i}.$
\end{Lemma}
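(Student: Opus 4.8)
The plan is to apply Holley's Inequality (Theorem~\ref{thm:holley}) to each of the two claims separately, taking the index set $I$ to be the $i$-plaquettes of the smaller complex $X$. In both cases the two measures live on percolation subcomplexes of $X$ (the restriction of a measure on $Y$ to $X$ is supported on subcomplexes of $X^{(i)}$), so they are random vectors in $\set{0,1}^I$ as required. The heart of the argument is to verify the Holley lattice condition: for each plaquette $\sigma$ and each pair of configurations with one dominating the other on $I\setminus\set{\sigma}$, I must compare the conditional probabilities that $\sigma$ is open. Since the Radon--Nikodym weight is $p^{\abs{P}}(1-p)^{\abs{X^{(i)}}-\abs{P}}$ times a cluster-weight term, the conditional odds ratio that $\sigma$ is open, given the states of all other plaquettes, is
\[
\frac{p}{1-p}\cdot \frac{W(P\cup\set{\sigma})}{W(P\setminus\set{\sigma})}\,,
\]
where $W$ denotes the relevant cohomological (or homological, via Lemma~\ref{lemma:UCTCboundaryfinite}) cluster weight. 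The $p/(1-p)$ factor is identical for both measures, so everything reduces to controlling how the ratio of cluster weights changes when one opens $\sigma$ and when one enlarges the surrounding configuration.

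For the \emph{free} claim $\mu^{\mathbf{f}}_{X,p,G,i} \leqst \paren{\mu^{\mathbf{f}}_{Y,p,G,i}}\big|_{X}$, the key structural fact I would isolate is that for free boundary conditions the cluster weight is $\abs{H^{i-1}(P;\Z_q)}$, whose size by Proposition~\ref{prop:torsion} is governed by $\mathbf{b}_{i-1}(P;\Z)$ and the torsion of $H_{i-1}(P;\Z)$. I expect to show that the multiplicative jump in the weight upon opening $\sigma$ is monotone under adding plaquettes to the ambient configuration --- this is the analogue of the statement that opening an edge can only ever decrease the number of connected components, and that adding edges elsewhere only makes this effect weaker. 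Concretely, opening $\sigma$ either kills an $(i-1)$-cycle class (decreasing the weight by a factor of $q$) or creates a new $i$-cycle (leaving the relevant weight unchanged), and whether it kills a class is an increasing event in the surrounding configuration. Making this precise via the long exact sequence of the pair, exactly as in the proof of Corollary~\ref{cor:boundarywelldefined}, gives the required inequality of conditional odds. The \emph{wired} claim is handled by the dual/symmetric statement, where enlarging the domain with wired conditions has the opposite monotonicity, so the inequality reverses.

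The cleanest route for the wired case, rather than redoing the cochain bookkeeping, is to invoke duality. By Theorem~\ref{theorem:dualitygeneral}, a wired $(d-i)$-dimensional PRCM on a box corresponds to a free $i$-dimensional PRCM on the dual box, with the complementation $P\mapsto Q$ reversing inclusions. Under this correspondence, $\paren{\mu^{\mathbf{w}}_{Y,p,G,i}}\big|_X \leqst \mu^{\mathbf{w}}_{X,p,G,i}$ translates precisely into the free monotonicity statement already established on the dual complexes, with the domination direction flipped by the inclusion-reversing complementation. Thus the second inequality follows formally from the first.

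\textbf{Main obstacle.} I expect the difficulty to be entirely in verifying the Holley lattice condition, specifically in showing that the ratio $W(P\cup\set{\sigma})/W(P\setminus\set{\sigma})$ of cohomological cluster weights is \emph{monotone} in the ambient configuration on $I\setminus\set{\sigma}$. For the classical RCM this is the familiar fact that the effect of an edge on the number of components is monotone; here the combinatorial component-counting is replaced by ranks of cohomology groups, so I would need a topological substitute. The tool I anticipate using is the long exact sequence of the pair together with the observation (as in Proposition~\ref{prop:torsion}) that opening a single plaquette changes $\abs{H^{i-1}(P;\Z_q)}$ by a factor that is either $1$ or $q^{-1}$ according to whether $[\partial\sigma]$ is already a boundary; the monotonicity then amounts to checking that ``$[\partial\sigma]$ becomes a boundary'' is an increasing event in the other plaquettes, which is immediate since adding plaquettes can only enlarge the image of the boundary map.
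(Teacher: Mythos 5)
Your overall strategy---Holley's inequality plus monotonicity of the cluster-weight ratio in the surrounding configuration---is exactly what the paper intends (its proof is the single sentence ``this is a straightforward consequence of Theorem~\ref{thm:holley}''), but your execution plan has a genuine gap at its central step. You take $I=X^{\paren{i}}$ and write the conditional odds that $\sigma$ is open under $\paren{\mu^{\mathbf{f}}_{Y,p,G,i}}\big|_{X}$ as $\tfrac{p}{1-p}\cdot W\paren{P\cup\set{\sigma}}/W\paren{P\setminus\set{\sigma}}$. That formula is correct for $\mu^{\mathbf{f}}_{X,p,G,i}$, but not for the \emph{marginal} of the measure on $Y$: its single-plaquette conditionals are averages over the unseen configuration on $Y^{\paren{i}}\setminus X^{\paren{i}}$, and are not given by a ratio of cluster weights of configurations in $X$ alone. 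The repair is to condition first on the external configuration $\eta$ on $Y\setminus X$: by Corollary~\ref{cor:boundarywelldefined} the conditional law $\paren{\mu^{\mathbf{f}}_{Y,p,G,i}\,\|\,\eta}\big|_X$ is a PRCM on $X$ with boundary conditions, so it suffices to prove $\mu^{\mathbf{f}}_{X,p,G,i}\leqst\mu^{\xi}_{X,p,G,i}$ for every $\xi$ (i.e.\ extremality of free conditions) and then observe that stochastic domination is preserved when the dominating measure is replaced by a mixture. For that fixed-$\xi$ comparison Holley applies verbatim, and the needed monotonicity is that $\abs{H_{i-1}\paren{P\cup\sigma;\;\Z_q}}/\abs{H_{i-1}\paren{P;\;\Z_q}}$ equals the reciprocal of the order of the class $\brac{\partial\sigma}$ in $H_{i-1}\paren{P;\;\Z_q}$ --- note that for composite $q$ this factor can be any divisor of $q$, not only $1$ or $q^{-1}$ as you assert --- and that this order can only decrease under inclusion-induced maps, since the order of the image of a group element divides its order. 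With the weight $\abs{\im\phi_*}$ the same computation goes through because $\brac{\partial\sigma}$ already lies in $\im\phi_*$.

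Your proposed shortcut for the wired inequality via Theorem~\ref{theorem:dualitygeneral} also does not cover the statement as written: the duality theorem applies to boxes $r\subset\Z^d$, while the lemma concerns arbitrary subcomplexes $X\subset Y$, and even for boxes you would need to verify that dualization intertwines restriction of marginals with restriction to the dual subdomain and that the inclusion of domains dualizes to an inclusion. The direct route is symmetric to the free case: condition on $\eta$, identify the conditional law as $\mu^{\xi}_{X,p,G,i}$ for a boundary condition $\xi$ containing $\eta$ together with the wiring of $Y$, and use the extremality $\mu^{\xi}_{X,p,G,i}\leqst\mu^{\mathbf{w}}_{X,p,G,i}$, which follows from the same order-monotonicity with the inclusions reversed. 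This avoids both the dimension shift $i\mapsto d-i$ and the restriction-to-boxes issue.
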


\begin{proof}
    This is a straightforward consequence of Theorem~\ref{thm:holley}.
\end{proof}

This gives a quick proof that the free and wired limits exist.
\begin{Proposition}\label{prop:freewired}
    The limits 
    \[\mu_{\Z^d,p,q,i}^{\mathbf{f}} \coloneqq \lim_{n \to \infty} \mu^{\mathbf{f}}_{\Lambda_n,p,q,i}\,\] 
    and 
    \[\mu_{\Z^d,p,q,i}^{\mathbf{w}} \coloneqq \lim_{n \to \infty} \mu^{\mathbf{w}}_{\Lambda_n,p,q,i}\,\]
    exist.
\end{Proposition}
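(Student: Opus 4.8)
The plan is to show that each sequence $\mu^{\mathbf{f}}_{\Lambda_n,p,q,i}$ and $\mu^{\mathbf{w}}_{\Lambda_n,p,q,i}$ is monotone in $n$ with respect to stochastic domination, and then invoke a standard convergence result for monotone sequences of measures on $\{0,1\}^{(\Z^d)^{(i)}}$. Concretely, Lemma~\ref{lemma:freewiredmono} applied to the nested boxes $\Lambda_n \subset \Lambda_{n+1}$ gives
\[
\mu^{\mathbf{f}}_{\Lambda_n,p,q,i} \leqst \paren{\mu^{\mathbf{f}}_{\Lambda_{n+1},p,q,i}}\big|_{\Lambda_n}
\quad\text{and}\quad
\paren{\mu^{\mathbf{w}}_{\Lambda_{n+1},p,q,i}}\big|_{\Lambda_n} \leqst \mu^{\mathbf{w}}_{\Lambda_n,p,q,i}.
\]
So I would first observe that the free measures, viewed through their restrictions to any fixed finite window, form an increasing sequence, while the wired measures form a decreasing sequence, in the stochastic order.

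The key analytic input is the following standard fact: if $\mu_n$ is a sequence of probability measures on $\{0,1\}^S$ for a countable set $S$ (here $S = (\Z^d)^{(i)}$), and the sequence is monotone in stochastic domination, then it converges weakly. First I would make precise how to compare measures living on different boxes: for a fixed increasing (cylinder) event $A$ depending only on plaquettes in some finite region $W$, the quantity $\mu^{\mathbf{f}}_{\Lambda_n,p,q,i}(A)$ is eventually defined for all large $n$ (once $\Lambda_n \supset W$), and the monotonicity above shows that $n \mapsto \mu^{\mathbf{f}}_{\Lambda_n,p,q,i}(A)$ is nondecreasing, hence convergent since it is bounded in $[0,1]$. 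Because increasing cylinder events generate the product $\sigma$-algebra and determine a measure on $\{0,1\}^S$, convergence of these probabilities for all such $A$ yields a limiting measure $\mu^{\mathbf{f}}_{\Z^d,p,q,i}$, and the sequence converges weakly to it. The wired case is identical except that $n \mapsto \mu^{\mathbf{w}}_{\Lambda_n,p,q,i}(A)$ is nonincreasing.

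I expect the main technical point — though not a deep obstacle — to be the bookkeeping of events on boxes of different sizes: one must check that restricting to $\Lambda_n$ and then evaluating an increasing event is compatible with the stochastic domination statement as $n$ grows, so that the limit is genuinely taken over an eventually monotone real sequence. This is handled cleanly by noting that every increasing cylinder event on a finite window $W$ is increasing as a function of the full configuration, and that stochastic domination is preserved under restriction to $W$. Once monotonicity of $\mu^{(\cdot)}_{\Lambda_n}(A)$ in $n$ is in hand for each such $A$, existence of the limit is immediate from the monotone convergence of bounded real sequences, and the identification of the limit as a probability measure follows from the fact that increasing cylinder events form a convergence-determining class for the weak topology on $\{0,1\}^S$.

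\begin{proof}
By Lemma~\ref{lemma:freewiredmono} applied to $\Lambda_n \subset \Lambda_{n+1},$ the restricted free measures satisfy $\mu^{\mathbf{f}}_{\Lambda_n,p,q,i} \leqst \paren{\mu^{\mathbf{f}}_{\Lambda_{n+1},p,q,i}}\big|_{\Lambda_n}$ and the restricted wired measures satisfy $\paren{\mu^{\mathbf{w}}_{\Lambda_{n+1},p,q,i}}\big|_{\Lambda_n} \leqst \mu^{\mathbf{w}}_{\Lambda_n,p,q,i}.$ Fix an increasing cylinder event $A$ depending only on finitely many plaquettes, and choose $N$ with $\Lambda_N$ containing the support of $A.$ For $n \geq N$ the stochastic domination relations give that $n \mapsto \mu^{\mathbf{f}}_{\Lambda_n,p,q,i}\paren{A}$ is nondecreasing and $n \mapsto \mu^{\mathbf{w}}_{\Lambda_n,p,q,i}\paren{A}$ is nonincreasing. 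As both sequences are bounded in $\brac{0,1},$ they converge. Increasing cylinder events form a convergence-determining class on $\set{0,1}^{\paren{\Z^d}^{\paren{i}}},$ so the limits define probability measures $\mu^{\mathbf{f}}_{\Z^d,p,q,i}$ and $\mu^{\mathbf{w}}_{\Z^d,p,q,i},$ and the respective sequences converge weakly to them.
\end{proof}
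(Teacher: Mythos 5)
Your proposal is correct and follows essentially the same route as the paper: both use Lemma~\ref{lemma:freewiredmono} to get monotonicity in $n$ of the restrictions to a fixed finite window and then conclude by convergence of monotone sequences. You simply spell out the standard bookkeeping (increasing cylinder events, bounded monotone real sequences, convergence-determining class) that the paper's one-line proof leaves implicit.
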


\begin{proof}
    By Lemma~\ref{lemma:freewiredmono}, for fixed $m,$ $\paren{\mu^{\mathbf{f}}_{\Lambda_n,p,q,i}}\big|_{\Lambda_m}$ and $\paren{\mu^{\mathbf{w}}_{\Lambda_n,p,q,i}}\big|_{\Lambda_m}$ are increasing and decreasing in $n$ respectively, so they must converge.
\end{proof}

As in the classical RCM, it is not hard to see that the free and wired boundary conditions are extremal.

\begin{Proposition}\label{prop:extremalmeasures}
Let $r$ be a box in $\Z^d$ and let $\xi$ be any boundary conditions. Then    
\[\mu^{\mathbf{f}}_{r,p,G,i} \leqst\mu^{\xi}_{r,p,G,i}  \leqst\mu^{\mathbf{w}}_{r,p,G,i}\,.\]

In addition, for any $\mu_{\Z^d,p,G,i}$ which is an infinite volume random-cluster measure which is a weak limit of measures on boxes with boundary conditions, 

\begin{equation}\label{eq:extremalmeasures}
    \mu^{\mathbf{f}}_{\Z^d,p,G,i} \leqst\mu_{\Z^d,p,G,i}  \leqst\mu^{\mathbf{w}}_{\Z^d,p,G,i}\,.  
\end{equation}

\end{Proposition}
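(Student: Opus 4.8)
The plan is to establish the finite-volume sandwich with Holley's inequality (Theorem~\ref{thm:holley}) and then obtain the infinite-volume statement by passing to weak limits. First I would dispose of the degenerate cases: for $p \in \set{0,1}$ every measure in sight is a point mass (at the empty or full configuration), so the domination is trivial, and I may assume $p \in (0,1)$. All three measures $\mu^{\mathbf{f}}_{r,p,G,i}$, $\mu^{\xi}_{r,p,G,i}$, $\mu^{\mathbf{w}}_{r,p,G,i}$ live on the common configuration space $\set{0,1}^{r^{(i)}}$ indexed by the $i$-plaquettes of $r$, and since the cluster weight $\abs{\im\phi_*} \geq 1$ and the $p, 1-p$ factors are positive, each measure is strictly positive. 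Thus Holley applies once its hypothesis is checked.

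To check that hypothesis I would compute the single-plaquette conditional probabilities. Writing the weight homologically via Lemma~\ref{lemma:UCTCboundaryfinite} as $W^{\xi}(P) \coloneqq \abs{\im\phi_*}$, for a plaquette $\sigma$ and a fixed state of the remaining plaquettes a direct computation gives
\[
\mu^{\xi}_{r,p,G,i}\paren{\sigma \in P \mid P \setminus \set{\sigma}} = \frac{p\,R_\sigma}{p\,R_\sigma + (1-p)}, \qquad R_\sigma \coloneqq \frac{W^{\xi}\paren{P \cup \set{\sigma}}}{W^{\xi}\paren{P \setminus \set{\sigma}}}.
\]
Since $t \mapsto pt/(pt + 1-p)$ is increasing for $t > 0$, this conditional probability is an increasing function of $R_\sigma$. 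Hence Holley's hypothesis for both $\mu^{\mathbf{f}} \leqst \mu^{\xi}$ and $\mu^{\xi} \leqst \mu^{\mathbf{w}}$ reduces to showing that $R_\sigma$ is monotone increasing (a) in the configuration of the other plaquettes of $r$, and (b) in the boundary condition, with free and wired being extremal, since they correspond to the empty and the full set of open external plaquettes. Combining (a) and (b) with $W \leq Z$ then yields $R^{\mathbf{f}}_\sigma(W) \leq R^{\mathbf{f}}_\sigma(Z) \leq R^{\xi}_\sigma(Z)$ and $R^{\xi}_\sigma(W) \leq R^{\xi}_\sigma(Z) \leq R^{\mathbf{w}}_\sigma(Z)$, which is exactly what Holley requires.

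The hard part will be this monotonicity of $R_\sigma$. Geometrically, $R_\sigma$ records how much opening $\sigma$ changes the cluster weight, which is governed by whether the class $\brac{\partial\sigma}$ is already a boundary once the other open plaquettes (inside $r$, or supplied by the boundary condition) are accounted for; opening additional plaquettes, or enlarging the external configuration, can only make $\brac{\partial\sigma}$ bound, never the reverse, so $R_\sigma$ can only increase. For $i = 1$ this is precisely the classical fact that the weight ratio is $1$ or $q^{-1}$ according to whether the endpoints of $\sigma$ are already externally connected, and that opening edges or wiring the boundary can only connect them. For general $i$ I would run the same chain-level argument already used for Lemma~\ref{lemma:freewiredmono}; the one genuine complication is that for composite $q$ the module $H_{i-1}(\cdot;\Z_q)$ may carry torsion, so the ratio need not lie in $\set{1,q^{-1}}$ and one must track the order of $\brac{\partial\sigma}$ rather than merely whether it vanishes. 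This is where I expect the bookkeeping to be delicate.

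Finally, for the infinite-volume statement I would write $\mu_{\Z^d,p,G,i}$ as a weak limit of measures $\mu^{\xi_n}_{r_n,p,G,i}$ along an exhaustion $r_n \uparrow \Z^d$. For each $n$ the finite-volume result gives $\mu^{\mathbf{f}}_{r_n} \leqst \mu^{\xi_n}_{r_n} \leqst \mu^{\mathbf{w}}_{r_n}$. By Lemma~\ref{lemma:freewiredmono} the free (respectively wired) measures are monotone in the box, so by the argument of Proposition~\ref{prop:freewired} their limits along $r_n$ exist and equal $\mu^{\mathbf{f}}_{\Z^d}$ and $\mu^{\mathbf{w}}_{\Z^d}$ independently of the chosen exhaustion. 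Since stochastic domination is preserved under weak convergence (the event $\set{P_1 \subseteq P_2}$ is closed in the product topology), letting $n \to \infty$ yields $\mu^{\mathbf{f}}_{\Z^d} \leqst \mu_{\Z^d} \leqst \mu^{\mathbf{w}}_{\Z^d}$, as required.
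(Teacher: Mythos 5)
The paper offers no proof of this proposition (it is asserted as ``not hard to see,'' with Lemma~\ref{lemma:freewiredmono} likewise reduced to a one-line appeal to Theorem~\ref{thm:holley}), and your Holley-plus-weak-limits outline is exactly the intended argument. The one step you flag as delicate does close cleanly: writing $R_\sigma = 1/m$ where $m$ is the order of $[\partial\sigma]$ in $H_{i-1}\paren{\paren{P\setminus\set{\sigma}}_\xi;\;\Z_q}$ (so that opening $\sigma$ divides $\abs{\im\phi_*}$ by exactly $m$), monotonicity of $R_\sigma$ in both the configuration and the boundary condition follows because enlarging either one induces a homomorphism on $H_{i-1}$ carrying $[\partial\sigma]$ to $[\partial\sigma]$, and the order of the image of an element under a homomorphism divides the order of the element.
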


As a consequence of Theorem~\ref{thm:mostgeneralduality}, there is also a relationship between dual limiting measures, when they exist.

\begin{Theorem}\label{thm:limduality}
    Suppose the weak limit
    \[\mu^\xi_{\Z^d,p,q,i} \coloneqq \lim_{n \to \infty} \mu^{\xi}_{\Lambda_n,p,q,i}\]
    exists. Then the weak limit
    \[\mu^{\overline{\xi^{\bullet}}}_{\paren{\Z^d}^{\bullet},p^*,q,d-i} \coloneqq \lim_{n \to \infty} \mu^{\overline{\xi^{\bullet}}}_{\overline{\Lambda_n^{\bullet}},p^*,q,d-i}\]
    also exists, and satisfies
    \[\mu^\xi_{\Z^d,p,q,i}\paren{P}  \stackrel{d}{=} \mu^{\overline{\xi^{\bullet}}}_{\paren{\Z^d}^{\bullet},p^*,q,d-i}\paren{Q}\,.\]
\end{Theorem}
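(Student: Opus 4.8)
The plan is to deduce the infinite-volume duality directly from the finite-volume duality of Theorem~\ref{thm:mostgeneralduality} by recognizing the primal-to-dual assignment $P \mapsto Q$ as a single fixed homeomorphism of configuration spaces, independent of scale, and then invoking the fact that pushforward by a continuous map preserves weak convergence. The wired-at-infinity nature of $\overline{\xi^{\bullet}}$ is already fully encoded in Theorem~\ref{thm:mostgeneralduality}, so it can be used as a black box.

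First I would fix the right state spaces. A configuration is recorded by its open $i$-plaquettes, so the natural space for the primal model is $\Omega = \set{0,1}^{(\Z^d)^{(i)}}$ and for the dual model $\Omega^{\bullet} = \set{0,1}^{((\Z^d)^{\bullet})^{(d-i)}}$, each given the product topology, under which they are compact and metrizable and weak convergence is equivalent to convergence of all cylinder (local-event) probabilities. The duality assignment $\Phi : \Omega \to \Omega^{\bullet}$ sends $P$ to the complex $Q$ whose open $(d-i)$-plaquettes are exactly those dual to the \emph{closed} $i$-plaquettes of $P$; because this is a coordinatewise complement composed with the fixed relabeling $\sigma \leftrightarrow \sigma^{\bullet}$, it is a homeomorphism, and in particular continuous and scale-independent. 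The key structural observation is that, under this identification, Theorem~\ref{thm:mostgeneralduality} says precisely that the finite-volume dual measure is the pushforward of the primal one: $\mu^{\overline{\xi^{\bullet}}}_{\overline{\Lambda_n^{\bullet}},p^*,q,d-i} = \Phi_* \mu^{\xi}_{\Lambda_n,p,q,i}$ for every $n$, after the harmless step of regarding each finite-volume measure as a measure on the full space by recording only its active plaquettes.

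With this in hand the argument is immediate: since $\mu^{\xi}_{\Lambda_n,p,q,i} \to \mu^{\xi}_{\Z^d,p,q,i}$ weakly by hypothesis and $\Phi$ is continuous, the pushforwards converge, $\Phi_* \mu^{\xi}_{\Lambda_n,p,q,i} \to \Phi_* \mu^{\xi}_{\Z^d,p,q,i}$, so the sequence $\mu^{\overline{\xi^{\bullet}}}_{\overline{\Lambda_n^{\bullet}},p^*,q,d-i}$ converges weakly with limit $\Phi_* \mu^{\xi}_{\Z^d,p,q,i}$. Unwinding the definition of $\Phi$ gives exactly $\mu^{\xi}_{\Z^d,p,q,i}(P) \stackrel{d}{=} \mu^{\overline{\xi^{\bullet}}}_{(\Z^d)^{\bullet},p^*,q,d-i}(Q)$, which is the claim. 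If one prefers to avoid the language of pushforwards, the same reasoning can be phrased at the level of cylinder events: fix a finite set $S$ of $(d-i)$-plaquettes of $(\Z^d)^{\bullet}$ and let $S^{\bullet}$ be the dual set of $i$-plaquettes; for $n$ large enough that both $S$ and $S^{\bullet}$ are active, Theorem~\ref{thm:mostgeneralduality} equates the $\mu^{\overline{\xi^{\bullet}}}_{\overline{\Lambda_n^{\bullet}}}$-probability of any configuration on $S$ with the $\mu^{\xi}_{\Lambda_n}$-probability of the complementary configuration on $S^{\bullet}$, and the latter converges by hypothesis.

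The step that requires the most care is the bookkeeping in identifying the finite-volume dual measure with the pushforward $\Phi_* \mu^{\xi}_{\Lambda_n}$ uniformly in $n$. Concretely, one must check that the scale-$n$ duality of Theorem~\ref{thm:mostgeneralduality}, whose state spaces are the percolation subcomplexes of $\Lambda_n$ and of $\overline{\Lambda_n^{\bullet}}$, is the restriction of the single global map $\Phi$, and that the extra boundary $(d-i)$-plaquettes carried by $\overline{\Lambda_n^{\bullet}}$ do not interfere with any fixed local window. This is where I expect the only genuine subtlety to lie: these boundary plaquettes sit near $\partial \Lambda_n$ and therefore leave every fixed finite window $S$ as $n \to \infty$, so they affect the marginal on $S$ for only finitely many $n$ and are irrelevant to the limit. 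Once this compatibility is recorded, no further estimates are needed.
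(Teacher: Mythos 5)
Your argument is correct and is essentially the route the paper intends: the paper states Theorem~\ref{thm:limduality} as a direct consequence of the finite-volume duality in Theorem~\ref{thm:mostgeneralduality} (without spelling out the details), and your pushforward/cylinder-event argument is exactly the standard way to make that deduction precise. Your attention to the bookkeeping --- that the scale-$n$ bijection is the restriction of one global complementation map and that the boundary plaquettes of $\overline{\Lambda_n^{\bullet}}$ eventually leave any fixed finite window --- covers the only points that need checking.
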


We also see that the general free and wired measures are dual.

\begin{Corollary}
    The free and wired measures $\mu^{\mathbf{f}}_{\Z^d,p,q,i}$ and $\mu^{\mathbf{w}}_{\Z^d,p,q,i}$ satisfy
    \[\mu^{\mathbf{f}}_{\Z^d,p,q,i} \paren{P} \stackrel{d}{=} \mu^{\mathbf{w}}_{\Z^d,p^*,q,d-i}\paren{Q}\]
\end{Corollary}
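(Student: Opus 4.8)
The plan is to invoke the infinite-volume duality of Theorem~\ref{thm:limduality} with the choice $\xi = \mathbf{f}$, and then to identify the resulting dual boundary condition $\overline{\mathbf{f}^{\bullet}}$ with the wired condition $\mathbf{w}$. First, Proposition~\ref{prop:freewired} ensures that the free limit $\mu^{\mathbf{f}}_{\Z^d,p,q,i}$ exists, so the hypothesis of Theorem~\ref{thm:limduality} is met. The theorem then produces a dual limiting measure $\mu^{\overline{\mathbf{f}^{\bullet}}}_{\paren{\Z^d}^{\bullet},p^*,q,d-i}$ and asserts that it is dual to $\mu^{\mathbf{f}}_{\Z^d,p,q,i}$.

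The remaining task is to recognize this dual measure as the wired measure. Free boundary conditions amount to taking every external $i$-plaquette to be closed. By the definition of the dual boundary condition, $\mathbf{f}^{\bullet}$ consists of all $(d-i)$-plaquettes dual to closed $i$-plaquettes of $\mathbf{f}$; since all external $i$-plaquettes of $\mathbf{f}$ are closed, $\mathbf{f}^{\bullet}$ contains every external $(d-i)$-plaquette, which is exactly the wired condition $\mathbf{w}$. As $\mathbf{f}^{\bullet}$ omits only finitely many plaquettes in any box, the remark following Definition~\ref{defn:wiredboundaryconditions} gives that on each dual box the barred measure agrees with the unbarred one, so $\mu^{\overline{\mathbf{f}^{\bullet}}}_{\paren{\Z^d}^{\bullet},p^*,q,d-i} \stackrel{d}{=} \mu^{\mathbf{w}}_{\paren{\Z^d}^{\bullet},p^*,q,d-i}$. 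The existence of this wired limit is moreover consistent with Proposition~\ref{prop:freewired}.

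Finally, because $\paren{\Z^d}^{\bullet} = \Z^d + \paren{1/2,\ldots,1/2}$ is a translate of $\Z^d$ and the PRCM is translation invariant, we identify $\mu^{\mathbf{w}}_{\paren{\Z^d}^{\bullet},p^*,q,d-i}$ with $\mu^{\mathbf{w}}_{\Z^d,p^*,q,d-i}$, yielding the claim. The only delicate point is the bookkeeping in the second paragraph: one must confirm that the free condition dualizes to the full wired condition and that the Borel--Moore ``wired at infinity'' device of Definition~\ref{defn:wiredboundaryconditions} collapses to ordinary wired boundary conditions, which holds precisely because $\mathbf{f}^{\bullet}$ leaves out only finitely many dual plaquettes in any bounded region.
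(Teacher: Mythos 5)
Your overall route is the same as the paper's: apply Theorem~\ref{thm:limduality} with free boundary conditions, observe that the dual boundary condition $\overline{\mathbf{f}^{\bullet}}$ is the wired one, and note that the Borel--Moore decoration is harmless because $\mathbf{f}^{\bullet}$ contains every external dual plaquette (so the remark following Definition~\ref{defn:wiredboundaryconditions} applies). That part of your argument is correct and matches the paper's reasoning.

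There is, however, one step you gloss over that the paper spends half its proof on. Theorem~\ref{thm:limduality} produces the dual limit along the boxes $\overline{\Lambda_n^{\bullet}}$ (the dual boxes \emph{including} their boundary $(d-i)$-plaquettes), whereas the wired measure $\mu^{\mathbf{w}}_{\Z^d,p^*,q,d-i}$ of Proposition~\ref{prop:freewired} is defined as the limit along the plain boxes $\Lambda_n$. These are a priori two different limits, and saying the existence of the wired limit is ``consistent with'' Proposition~\ref{prop:freewired} does not identify them. The paper closes this gap by checking
\[\lim_{n\to\infty} \mu^{\mathbf{w}}_{\overline{\Lambda_n},p,q,i} \stackrel{d}{=} \lim_{n\to\infty} \mu^{\mathbf{w}}_{\Lambda_n,p,q,i}\,,\]
which follows from the sandwich $\Lambda_n \subset \overline{\Lambda_n} \subset \Lambda_{n+1}$ together with the monotonicity of wired measures under inclusion (Lemma~\ref{lemma:freewiredmono}): the two sequences of restricted measures are interleaved, so they converge to the same limit. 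You should add this interpolation argument; with it, your proof is complete and essentially identical to the paper's.
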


\begin{proof}
    The statement essentially follows from Theorem~\ref{thm:limduality}. Since the choice between Definitions~\ref{defn:boundaryconditions} and~\ref{defn:wiredboundaryconditions} is irrelevant for wired boundary conditions, we need only check that 
    \[\lim_{n\to\infty} \mu^{\mathbf{w}}_{\overline{\Lambda_n},p,q,i} \stackrel{d}{=} \lim_{n\to\infty} \mu^{\mathbf{w}}_{\Lambda_n,p,q,i}\,.\]
    This follows from Lemma~\ref{lemma:freewiredmono}, since for all $n,$ we have
    \[\Lambda_n \subset \overline{\Lambda_n} \subset \Lambda_{n+1}\,.\]
\end{proof}

\section{Uniqueness of the Infinite Volume Measure}

In the classical RCM with a fixed parameter $q,$ the wired and free infinite volume measures are known to coincide except possibly at a countable set of values of $p.$ In this section we adapt Grimmett's proof of this result~\cite{grimmett1995stochastic} to the PRCM; only minor modifications are required. One application of this result comes from Proposition 34 in~\cite{duncan2023sharp}: if $p$ is such that there is an infinite volume PRCM, two notions of surface tension given in terms of the asymptotic probability that an $(i-1)$-cycle $\gamma$ is null-homologous coincide. When $q\in \N + 2,$ this in turn implies that two definitions of Wilson loop tension agree in the coupled PLGT.  

Fix $1 \leq i \leq d,$ let $r$ be a box, let $\xi$ be boundary conditions and let $\Omega_r$ be the set of $i$-dimensional percolation subcomplexes on $r.$ We work with a slight modification of the partition function for the PRCM on $r$ with boundary conditions $\xi,$ namely
\begin{align*}
    Y^{\xi}_{r,p,q} &\coloneqq \paren{1-p}^{-\abs{r^{\paren{i}}}}Z^{\xi}_{r,p,q} = \paren{1-p}^{-\abs{r^{\paren{i}}}}\sum_{P \in \Omega_r} p^{\abs{P}}\paren{1-p}^{\abs{r^{\paren{i}}}-\abs{P}}\abs{\im \phi^*}\\
    &= \sum_{P \in \Omega_r} \abs{\im \phi^*} \exp\paren{\pi\abs{P}}\,,
\end{align*}
where $Z^{\xi}_{r,p,q}$ is the usual partition function and $\pi \coloneqq \log\paren{p/\paren{1-p}}.$ We now consider a notion of pressure. For a box $r$ and boundary conditions $\xi$, write 
\[f^{\xi}_r\paren{p,q} \coloneqq \frac{1}{\abs{r^{\paren{i}}}} \log Y^{\xi}_{r,p,q}\,.\]

\begin{Proposition}\label{prop:pressure}
    Let $\set{r_k}$ be an increasing sequence of boxes with $\bigcup_k r_k = \Z^d$ and let $\xi$ be a set of boundary conditions. Then the limit
    \[f\paren{p,q} = f^{\xi}\paren{p,q} \coloneqq \lim_{k \to \infty} f^{\xi}_{r_k}\paren{p,q}\]
    exists and does not depend on the choice of $\set{r_k}$ or $\xi.$ Moreover, $f\paren{p,q}$ is a convex function of $\pi$ and therefore differentiable as a function of $p \in \paren{0,1}$ except possibly on a countable set.
\end{Proposition}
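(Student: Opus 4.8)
The plan is to prove existence of the limit, its independence of boundary conditions and of the exhausting sequence, and finally its convexity in $\pi$, with differentiability following from standard convexity theory.

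\textbf{Existence and convexity together.} I would first observe that convexity is essentially automatic from the structure of $f^{\xi}_r(p,q)$ for a \emph{fixed} box $r$. Writing $Y^{\xi}_{r,p,q} = \sum_{P \in \Omega_r} \abs{\im \phi^*}\, e^{\pi \abs{P}}$, the quantity $\log Y^{\xi}_{r,p,q}$ is the logarithm of a sum of nonnegative weights $\abs{\im\phi^*}\,e^{\pi\abs{P}}$, each of which is a (log-linear, hence log-convex) function of $\pi$. Since a sum of log-convex functions is log-convex, $\log Y^{\xi}_{r,p,q}$ is convex in $\pi$, and so is $f^{\xi}_r(p,q)$ after dividing by the positive constant $\abs{r^{(i)}}$. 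Convexity is preserved under pointwise limits, so once existence of the limit $f$ is established, its convexity in $\pi$ is immediate. This lets me treat existence as the real content.

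\textbf{Existence via subadditivity.} For existence I would use a subadditivity (or near-subadditivity) argument on the free partition function, exactly as in the classical random-cluster case. The idea is that $\log Y^{\mathbf{f}}_{r,p,q}$ is superadditive under gluing disjoint translates of boxes, because the free cohomology weight factorizes over disjoint pieces up to controlled boundary corrections; dividing by $\abs{r^{(i)}}$ and invoking Fekete's lemma (in its multidimensional form for rectangular exhaustions) gives convergence of $f^{\mathbf{f}}_{r_k}$ along boxes. The boundary corrections are $o(\abs{r^{(i)}})$ because the number of plaquettes on $\partial r$ is of lower order than the number of interior $i$-plaquettes, so they do not affect the limiting pressure.

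\textbf{Independence of $\xi$ and of the sequence.} Here I would lean on the results already proved in the excerpt. By Proposition~\ref{prop:extremalmeasures}, the free and wired boundary conditions are extremal among all $\xi$, which at the level of partition functions should pin $\log Y^{\xi}_{r,p,q}$ between $\log Y^{\mathbf{f}}_{r,p,q}$ and $\log Y^{\mathbf{w}}_{r,p,q}$ up to boundary-order error. Thus it suffices to show $f^{\mathbf{f}} = f^{\mathbf{w}}$, which again follows because free and wired partition functions differ only by a factor governed by boundary cohomology, contributing $O(\abs{\partial r}) = o(\abs{r^{(i)}})$ to $\log Y$. Independence of the exhausting sequence $\set{r_k}$ follows from the same boundary-to-bulk ratio estimate together with a standard sandwiching of an arbitrary box between two cubes $\Lambda_n \subset r_k \subset \Lambda_{m}$. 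Finally, differentiability of $f$ in $p$ away from a countable set is a direct consequence of convexity in $\pi$: a convex function of $\pi$ is differentiable except at countably many points, and since $\pi = \log(p/(1-p))$ is a smooth increasing bijection of $(0,1)$ onto $\R$, the exceptional set maps to a countable subset of $(0,1)$.

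\textbf{Main obstacle.} The step I expect to require the most care is the (super)additivity of the free cohomological partition function under gluing, since the cluster weight $\abs{H^{i-1}(P;\Z_q)}$ is a genuinely topological quantity and does not factorize as cleanly as the connected-component count in the classical model. The crux is to control the discrepancy between $H^{i-1}$ of a glued complex and the direct sum of the $H^{i-1}$ of its pieces, which I would handle via a Mayer--Vietoris argument showing the error term is bounded by the number of cells in the interface, hence of boundary order.
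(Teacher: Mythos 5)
Your proposal follows essentially the same route as the paper: near-superadditivity of $\log Z^{\mathbf{f}}$ under packing a large cube by disjoint translates of a smaller one, with the gluing error controlled by a Mayer--Vietoris argument and bounded by the size of the interface; sandwiching arbitrary $\xi$ between free and wired up to a factor $q^{O(\abs{\partial r})}$; and convexity in $\pi$ together with the standard countable-exceptional-set fact for convex functions. The only cosmetic differences are that you obtain convexity from log-convexity of a sum of log-linear terms where the paper differentiates twice, and that the free/wired sandwich is better derived directly from $\im\phi^*\leq H^{i-1}\paren{P}$ and the bound $\abs{H^{i-1}\paren{\partial r;\;\Z_q}}=q^{\abs{\paren{\partial r}^{\paren{i}}}-1}$ than from the stochastic domination of Proposition~\ref{prop:extremalmeasures}, which compares measures rather than partition functions.
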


\begin{proof}
We first verify that the limit $f^{\mathbf{f}}\paren{p,q}$ exists and does not depend on $\set{r_k}$. For simplicity of presentation, we will show this in the case of cubes and leave the extension to general boxes as an exercise. Note that if 
$\Lambda_m=\brac{-m,m}^d,$ we have for each $0 \leq j \leq d$ that 
\[\lim_{m \to \infty} \frac{\abs{\Lambda_m^{\paren{j}}}}{m^d}=2^d\binom{d}{j}\]
so 
 $\lim_{m \to \infty} f^{\mathbf{f}}_{\Lambda_m}\paren{p,q}$ exists if and only if  
 \[\lim_{m \to \infty} \frac{1}{m^d} Y^{\mathbf{f}}_{\Lambda_m,p,q} = -\log\paren{1-p} \lim_{m \to \infty} \frac{1}{m^d} Z^{\mathbf{f}}_{\Lambda_m,p,q}\] does.

We now recall a standard tool for understanding the topology of a space from that of its subspaces. The \emph{Mayer--Vietoris Sequence for Cohomology} relates the cohomology of a union of two spaces to the cohomology groups of the spaces and their intersection. See Sections 2.2 and 3.2 of~\cite{hatcher2002algebraic} for details. It is an exact sequence, meaning that the image of one  map in the sequence is the kernel of the next. Suppose $X = A \cup B.$ Then we apply the Mayer--Vietoris sequence to the decomposition $P_X = P_A \cup P_B$ where we denote $P\cap Y$ by $P_Y.$  
    \[\begin{tikzcd}[cramped, sep = small]
    H^{i-1}\paren{P_X} \arrow{r}{\varphi}  & H^{i-1}\paren{P_A} \oplus H^{i-1}\paren{P_B} \arrow{r}{\chi} & H^{i-1}\paren{P_{A\cap B}} 
    \end{tikzcd}
    \,.\]
    Now by the first isomorphism theorem,
    \begin{equation*}
        \frac{\abs{H^{i-1}\paren{P_A;\;\Z_q}}\abs{H^{i-1}\paren{P_B;\;\Z_q}}}{\abs{H^{i-1}\paren{P_X;\;\Z_q}}}\leq \abs{H^{i-1}\paren{P_{A\cap B};\;\Z_q}}
    \end{equation*}
    with equality holding if $A$ and $B$ are disjoint. We also have for any subcomplex $Y \subset X,$
    \[\abs{H^{i-1}\paren{Y;\;\Z_q}} \leq q^{\abs{Y^{\paren{i-1}}}}\,.\]
    Now, assume that the $i$-skeleta of $A$ and $B$ are disjoint. Combining the previous inequalities and summing over the $i$-plaquettes of $A\cup B$ yields
 \begin{equation}\label{eq:MV}
     \frac{1}{q^{\abs{A\cap B^{\paren{i-1}}}}}  Z^{\mathbf{f}}_{A,p,q}Z^{\mathbf{f}}_{B,p,q} \leq Z^{\mathbf{f}}_{A \cup B,p,q} 
 \end{equation}

    Now let $\Lambda_n \subset \Lambda_m.$ Consider a maximal packing of $\Lambda_m$ by disjoint translates $\set{\Lambda^l}_{l=1}^{k}$ of $\Lambda_n$ where 
    $$\paren{\frac{m}{n+1}-1}^d\leq k=k\paren{m,n}$$    
    Denote the union of these translates by $A$ and note that the disjointness of the cubes implies that 
    \begin{equation}\label{eq:disjoint}
            Z^{\mathbf{f}}_{A,p,q}=\paren{Z^{\mathbf{f}}_{\Lambda_n,p,q}}^k\,.
    \end{equation}

 Let $B$ be the induced subcomplex of $\Lambda_n$ containing all $i$-plaquettes that are not in any of the $\Lambda^l.$ $B$ is contained in the union of $m^d-n^d k $ unit $d$-cubes 
 so for any subcomplex $B_0$ of $B$ and any $1\leq j\leq d$ 
      \[\abs{B_0^{\paren{j}}}\leq c_j\paren{m^d-n^d k}\,,\]
      where $c_j \coloneqq 2^{2d-j}\binom{d}{j}.$ As a consequence,
      $$\log\paren{Z^{\mathbf{f}}_{B,p,q}}\leq \abs{B^{\paren{i-1}}}\leq c_{i-1}\log\paren{q} \paren{m^d-n^d k}\,.$$

Then, by Equations \ref{eq:MV} and \ref{eq:disjoint}, for fixed $n$ 
\begin{align*}
&\frac{1}{m^d}\log\paren{Z^{\mathbf{f}}_{\Lambda_m,p,q}}\\
&\qquad\geq  \frac{1}{m^d}\paren{ k \log\paren{Z^{\mathbf{f}}_{\Lambda_n,p,q}}+\log\paren{Z^{\mathbf{f}}_{B,p,q}} - \abs{A\cap B^{\paren{i-1}}}\log\paren{q}}\\
&\qquad \geq  \paren{\frac{1}{n+1}-\frac{1}{m}}^d\log\paren{Z^{\mathbf{f}}_{\Lambda_n,p,q}} -c_{i-1}\log\paren{q} \paren{1-\paren{\frac{n}{n+1}-\frac{n}{m}}^d}\,.
\end{align*}

So
\begin{align*}
&\liminf_{m\to\infty} \frac{1}{m^d}\log\paren{Z^{\mathbf{f}}_{\Lambda_m,p,q}}\\
&\qquad 
\begin{aligned}
\geq\limsup_{n\to\infty} \liminf_{m\to\infty} 
&\left[\paren{\frac{1}{n+1}-\frac{1}{m}}^d \log\paren{Z^{\mathbf{f}}_{\Lambda_n,p,q}}\right.\\
&\quad\left.-c_{i-1}\log\paren{q} \paren{1-\paren{\frac{n}{n+1}-\frac{n}{m}}^d}\right]
\end{aligned}\\
&\qquad\begin{aligned}\geq \limsup_{n\to\infty} \brac{\frac{1}{\paren{n+1}^d} \log\paren{Z^{\mathbf{f}}_{\Lambda_n,p,q}}-c_{i-1}\log\paren{q} \paren{1-\paren{\frac{n}{n+1}}^d}}\end{aligned}\\
&\begin{aligned}\qquad=\limsup_{n\to\infty} \frac{1}{n^d}\log\paren{Z^{\mathbf{f}}_{\Lambda_n,p,q}}\end{aligned}\,,
\end{align*}
and
$f^{\mathbf{f}}\paren{p,q}$ exists.
    
    Since $\abs{H^{i-1}\paren{\partial r_k;\;\Z_q}} = q^{\abs{\paren{\partial r_k}^{\paren{i}}}-1},$ we have
    \[Y^{\mathbf{f}}_{r_k,p,q} \leq Y^{\xi}_{r_k,p,q} \leq Y^{\mathbf{w}}_{r_k,p,q} \leq Y^{\mathbf{f}}_{r_k,p,q}q^{\abs{\paren{\partial r_k}^{\paren{i}}}-1}\,.\]
    Notice that
    \[\frac{\log\paren{q^{\abs{\paren{\partial r_k}^{\paren{i}}}}}}{\abs{r_k^{\paren{i}}}} \xrightarrow[]{k \to \infty} 0\]
    so the existence of the limit $f^{\mathbf{f}}\paren{p,q}$ implies that $f\paren{p,q}$ exists and is well defined.
    As in the classical RCM, taking the second derivative of $f^{\mathbf{f}}_{r_k}$ shows that it is a convex function of $\pi,$ so the limiting function $f\paren{p,q}$ is also convex. Since $\pi$ is a differentiable function of $p,$ we therefore have that $f\paren{p,q}$ is also differentiable outside of a countable set.
\end{proof}

\begin{Theorem}\label{thm:uniqueness}
    Suppose $f\paren{p,q}$ is differentiable as a function of $p$ at $p = p_0.$ Then there is a unique infinite volume PRCM with parameters $p_0,q.$
\end{Theorem}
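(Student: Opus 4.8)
The plan is to follow Grimmett's strategy and reduce uniqueness to the coincidence of the extremal free and wired infinite volume measures at $p_0$. By Proposition~\ref{prop:extremalmeasures}, every infinite volume PRCM $\mu$ arising as a weak limit of measures with boundary conditions satisfies $\mu^{\mathbf{f}}_{\Z^d,p,q,i} \leqst \mu \leqst \mu^{\mathbf{w}}_{\Z^d,p,q,i}$. Since stochastic domination is antisymmetric (if $\nu_1 \leqst \nu_2$ and $\nu_2 \leqst \nu_1$ then $\nu_1 = \nu_2$), it suffices to prove that $\mu^{\mathbf{f}}_{\Z^d,p_0,q,i} = \mu^{\mathbf{w}}_{\Z^d,p_0,q,i}$, after which the sandwiching forces every intermediate $\mu$ to equal this common measure.

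The bridge between the pressure and the geometry of the model is that differentiating the finite-volume pressure in $\pi$ recovers the expected plaquette density. First I would record, by differentiating $\log Y^\xi_{r,p,q} = \log \sum_P \abs{\im\phi^*}\, e^{\pi\abs{P}}$ under the sum, the identity
\[
\frac{\partial}{\partial \pi} f^\xi_r(p,q) = \frac{1}{\abs{r^{\paren{i}}}}\,\mathbb{E}_{\mu^\xi_{r,p,q,i}}\brac{\abs{P}}\,.
\]
The finite-volume pressures $f^{\mathbf{f}}_{r_k}$ and $f^{\mathbf{w}}_{r_k}$ are convex in $\pi$ and, by Proposition~\ref{prop:pressure}, both converge to the common boundary-independent limit $f(p,q)$. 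Differentiability of $f$ in $p$ at $p_0$ is equivalent to differentiability in $\pi$ at $\pi_0 = \log\paren{p_0/(1-p_0)}$ since $\pi$ is a smooth strictly increasing function of $p$. At such a point, the standard fact that pointwise limits of convex functions have one-sided derivatives converging to the derivative of the (differentiable) limit gives that the finite-volume free and wired densities share the same limit $f'(\pi_0)$.

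Next I would transfer equality of densities to equality of single-plaquette marginals. Writing the density as $\abs{r^{\paren{i}}}^{-1}\sum_{\sigma}\mu^{\mathbf{f}}_{r}\paren{\sigma \in P}$, and using that the limit $\mu^{\mathbf{f}}_{\Z^d}$ is translation invariant while the local event $\set{\sigma \in P}$ is continuous for weak convergence, a Cesàro argument (boundary plaquettes form a vanishing fraction and all probabilities lie in $[0,1]$) identifies the limiting volume-averaged density with the single-plaquette probability. The preceding step then yields $\mu^{\mathbf{f}}_{\Z^d,p_0,q,i}\paren{\sigma \in P} = \mu^{\mathbf{w}}_{\Z^d,p_0,q,i}\paren{\sigma \in P}$ for every $i$-plaquette $\sigma$. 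Finally, taking a monotone coupling $\kappa$ with $\kappa\paren{P^{\mathbf{f}} \subset P^{\mathbf{w}}}=1$, equality of marginals gives $\mathbb{E}_\kappa\brac{\mathbbm{1}_{\set{\sigma\in P^{\mathbf{w}}}} - \mathbbm{1}_{\set{\sigma\in P^{\mathbf{f}}}}} = 0$; since the integrand is nonnegative it vanishes almost surely for each $\sigma$, and intersecting over the countably many plaquettes gives $P^{\mathbf{f}} = P^{\mathbf{w}}$ almost surely. Hence $\mu^{\mathbf{f}}_{\Z^d,p_0,q,i} = \mu^{\mathbf{w}}_{\Z^d,p_0,q,i}$, and uniqueness follows from the opening reduction.

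I expect the main obstacle to be the middle two steps: carefully justifying that differentiability of the limiting pressure forces convergence of the finite-volume densities. One must invoke the convex-function derivative lemma while accounting for the fact that $f^{\mathbf{f}}_{r_k}$ and $f^{\mathbf{w}}_{r_k}$ need not be differentiable at $\pi_0$ themselves, so the argument proceeds through one-sided derivatives and the monotone sandwiching $f^{\mathbf{f}}_{r_k} \le f^\xi_{r_k} \le f^{\mathbf{w}}_{r_k}$, and then pair this with the translation-invariance argument that equates the limiting volume-averaged density with the single-plaquette marginal. The coupling step and the reduction via extremality are routine once these convexity and averaging facts are established.
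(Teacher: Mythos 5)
Your proposal is correct and follows essentially the same route as the paper: Grimmett's convexity argument, computing $\partial f^{\xi}_r/\partial\pi$ as the expected plaquette density, using boundary-condition independence and differentiability of the limiting pressure to force $h^{\mathbf{f}}(p_0,q)=h^{\mathbf{w}}(p_0,q)$, and then upgrading equality of single-plaquette marginals under stochastic ordering to equality of measures. The only cosmetic difference is that you spell out the final monotone-coupling step, where the paper simply cites Proposition 4.6 of Grimmett's book.
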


\begin{proof}
    Let $\xi \in \set{\mathbf{f},\mathbf{w}}$ and let $h^{\xi}\paren{p,q} = \mu^{\xi}_{\Z^d,p,q,i}\paren{\text{$\sigma$ is open}}.$ Since both measures are easily checked to be translation invariant, this value does not depend on the choice of plaquette $\sigma.$ We now compare $h^{\mathbf{f}}\paren{p,q}$ and $h^{\mathbf{w}}\paren{p,q}.$ Notice that for any box $r,$
    \begin{equation}\label{eq:dfdpi}
        \frac{d f^{\xi}_r\paren{p,q}}{d \pi} = \frac{1}{\abs{r^{\paren{i}}}} \mathbb{E}_{\mu^{\xi}}\paren{\abs{\set{\sigma \in r^{\paren{i}} \text { open}}}}\,.
    \end{equation}
    Then for any $\sigma_0 \in r^{\paren{i}},$ it follows from translation invariance and Proposition~\ref{eq:extremalmeasures} that
    \begin{align*}
         \frac{1}{\abs{r^{\paren{i}}}} \mathbb{E}_{\mu^{\mathbf{f}}}\paren{\abs{\set{\sigma \in r^{\paren{i}} \text { open}}}} & \leq \mu^{\mathbf{f}}_{\Z^d,p,q,i}\paren{\sigma_0 \text{ is open}} \\ 
         &\leq \mu^{\mathbf{w}}_{\Z^d,p,q,i}\paren{\sigma_0 \text{ is open}}\\
         & \leq \frac{1}{\abs{r^{\paren{i}}}} \mathbb{E}_{\mu^{\mathbf{w}}}\paren{\abs{\set{\sigma \in r^{\paren{i}} \text { open}}}}\,.
    \end{align*}
   
    Write $\pi_0 = \log\paren{p_0/\paren{1-p_0}}.$ Now from Proposition~\ref{prop:pressure} and convexity,
    \[\lim_{k \to \infty} \frac{d f^{\xi}_{r_k}\paren{p,q}}{d \pi} \paren{\pi_0} = \frac{d f\paren{p,q}}{d \pi} \paren{\pi_0}\,,\]
    so it follows that $h^{\mathbf{f}}\paren{p_0,q} = h^{\mathbf{w}}\paren{p_0,q}.$ Since $\mu^{\mathbf{f}}_{\Z^d,p_0,q,i} \leqst \mu^{\mathbf{w}}_{\Z^d,p_0,q,i}$ by Proposition~\ref{prop:extremalmeasures}, a standard comparison of cylinder events (Proposition 4.6 in~\cite{grimmett2006random}) gives 
    \[\mu^{\mathbf{f}}_{\Z^d,p_0,q,i} \,{\buildrel d \over =}\, \mu^{\mathbf{w}}_{\Z^d,p_0,q,i}\,,\]
    and all measures at $p_0$ coincide.
\end{proof}

\section{Positive Association in the PRCM}\label{sec:RCMproperties}

In this section we give an alternative proof using algebraic topology of the result of~\cite{shklarov2023edwards} that the PRCM is positively associated. First we adapt the proof of~\cite{hiraoka2016tutte} for the PRCM with coefficients in a field. 

\begin{Theorem}\label{thm:FKGgeneral}
Let $X$ be a finite cubical complex. Then $\mu_{X,p,q,i}$ is positively associated in the sense that for any two increasing events $A$ and $B$ 
\begin{equation*}
     \mu_{X,p,q,i}\paren{A\cap B} \geq \mu_{X,p,q,i}\paren{A}\mu_{X,p,q,i}\paren{B}.
 \end{equation*}
 \end{Theorem}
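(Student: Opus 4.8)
The plan is to apply Holley's Inequality (Theorem~\ref{thm:holley}) in the special form where we take both measures to be $\mu_{X,p,q,i}$ itself, which reduces positive association to the FKG lattice condition (equivalently, the strong FKG or monotonicity property). Concretely, writing the state space as $\set{0,1}^{X^{(i)}}$, it suffices to verify that for any plaquette $\sigma$ and any two configurations $W \leq Z$ (agreeing off $\sigma$), the conditional probability that $\sigma$ is open is monotone, i.e. $\mu_{X,p,q,i}(\sigma \text{ open} \mid W) \leq \mu_{X,p,q,i}(\sigma \text{ open} \mid Z)$. Because the measure is strictly positive, this is equivalent to checking the standard FKG lattice condition $\mu(\omega_1 \vee \omega_2)\mu(\omega_1 \wedge \omega_2) \geq \mu(\omega_1)\mu(\omega_2)$, and it is cleanest to verify it for configurations differing in at most two coordinates.

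First I would reduce, by the usual argument, to pairs of configurations $\omega_1, \omega_2$ that differ in exactly two plaquettes $\sigma$ and $\tau$, since the lattice condition for general pairs follows from the two-coordinate case together with strict positivity. Writing the density from Definition~\ref{def:prcm} as $p^{\abs{P}}(1-p)^{\abs{X^{(i)}}-\abs{P}}\abs{H^{i-1}(P;\Z_q)}$ and cancelling the common $p,(1-p)$ factors (the number of open plaquettes balances across $\omega_1\vee\omega_2,\omega_1\wedge\omega_2$ versus $\omega_1,\omega_2$), the inequality reduces to a purely topological statement about the Betti-number weights: letting $P_{\wedge}, P_{\vee}$ be the complexes with $\set{\sigma,\tau}$ both closed / both open, and $P_\sigma, P_\tau$ the two intermediate complexes, I must show
\begin{equation*}
\abs{H^{i-1}(P_\vee;\Z_q)}\cdot\abs{H^{i-1}(P_\wedge;\Z_q)} \geq \abs{H^{i-1}(P_\sigma;\Z_q)}\cdot\abs{H^{i-1}(P_\tau;\Z_q)}\,.
\end{equation*}

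The key step, and the main obstacle, is establishing this submodularity of $\log\abs{H^{i-1}(\cdot;\Z_q)}$ under adding plaquettes, which is exactly where the paper signals it will use the Mayer--Vietoris sequence. I would work homologically via Corollary~\ref{cor:UCTC} (replacing $\abs{H^{i-1}}$ by $\abs{H_{i-1}}$), set $A = P_\wedge \cup \set{\sigma}$ and $B = P_\wedge \cup \set{\tau}$ so that $A\cup B = P_\vee$ and $A\cap B = P_\wedge$, and apply the Mayer--Vietoris sequence for homology to $P_\vee = A\cup B$. Exactness gives, via the first isomorphism theorem applied to the connecting maps, an inequality relating $\abs{H_{i-1}(P_\vee)}$, $\abs{H_{i-1}(A)}$, $\abs{H_{i-1}(B)}$, $\abs{H_{i-1}(A\cap B)}$, and the relevant adjacent groups; the delicate bookkeeping is that adding a single $i$-plaquette changes $H_{i-1}$ by at most a controlled factor, and one must track how the boundaries of $\sigma$ and $\tau$ interact (whether they create new relations or not). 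The crux is verifying that the "deficit" of relations is superadditive, so that opening both plaquettes destroys at least as many $(i-1)$-cycles as opening them separately—this is precisely the content that Mayer--Vietoris packages, and care is needed because the finite-field/$\Z_q$ group orders multiply rather than add, so the exact sequence must be converted into the multiplicative inequality above by counting $\abs{\ker}\cdot\abs{\im} = \abs{\text{domain}}$ at each stage.

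Once the two-plaquette submodularity is in hand, the proof concludes immediately: the FKG lattice condition holds, Holley's Inequality yields $\mu_{X,p,q,i}\leqst \mu_{X,p,q,i}$ in the conditioned sense that produces positive association, and hence $\mu_{X,p,q,i}(A\cap B)\geq \mu_{X,p,q,i}(A)\mu_{X,p,q,i}(B)$ for increasing events $A,B$. I expect the algebraic-topology step to be where all the real work lies, with the probabilistic reduction to the lattice condition being standard.
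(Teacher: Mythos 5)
Your proposal follows essentially the same route as the paper: reduce via Holley's inequality to the FKG lattice condition, cancel the $p,(1-p)$ factors, and establish the resulting multiplicative inequality on cohomology group orders by applying the Mayer--Vietoris sequence together with repeated use of $\abs{\ker}\cdot\abs{\im}=\abs{\mathrm{domain}}$. Two small remarks: the reduction to configurations differing in two plaquettes is unnecessary, since Mayer--Vietoris applies directly to an arbitrary pair $P,P'$ (with $A=P$, $B=P'$), and the step you flag as the "main obstacle" requiring delicate bookkeeping of how the two plaquette boundaries interact is in fact immediate --- the segment $H^{i-1}\paren{P\cup P'}\to H^{i-1}\paren{P}\oplus H^{i-1}\paren{P'}\to H^{i-1}\paren{P\cap P'}$ plus exactness already yields the inequality with no further analysis.
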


 \begin{proof}
By Theorem~\ref{thm:holley} it suffices to show that 
\begin{equation}
\label{eq:holley}
\mu_{X,p,q,i}\paren{P\cup P'}\mu_{X,p,q,i}\paren{P\cap P'} \geq \mu_{X,p,q,i}\paren{P} \mu_{X,p,q,i}\paren{P'}
\end{equation}
for any $P,P'.$ As 
\[\abs{P\cup P'}+\abs{P\cap P'}=\abs{P}+\abs{P'}\]
the desired statement will follow from
 \[\abs{H^{i-1}\paren{P \cap P'}}\abs{H^{i-1}\paren{P \cup P'}} \geq \abs{H^{i-1}\paren{{P}}}  \abs{H^{i-1}\paren{{P'}}}\,,\]
where cohomology is taken with coefficients in $\Z_q.$ 

 Consider the following part of the Mayer--Vietoris sequence for $P$ and $P':$
\[\begin{tikzcd}[cramped, sep = small]
H^{i-1}\paren{P\cup P'} \arrow{r}{\varphi}  & H^{i-1}\paren{P} \oplus H^{i-1}\paren{P'} \arrow{r}{\chi} & H^{i-1}\paren{P \cap P'} \arrow{r}{\delta} & H^{i}\paren{P \cup P'}
\end{tikzcd}
\,.\]
We apply the first isomorphism theorem for abelian groups (that is, that if $\phi:G\rightarrow H$ is a homomorphism then $G\cong \im \phi \oplus \ker \phi$) to the first three terms of the sequence.

First,
\[H^{i-1}\paren{P\cup P'}\cong  \im \varphi \oplus \ker \varphi\]
so 
\[\abs{H^{i-1}\paren{P\cup P'}}=\abs{ \im \varphi}\abs{ \ker \varphi}.\]

Next,
\[ H^{i-1}\paren{P} \oplus H^{i-1}\paren{P'}\cong \im \chi \oplus \ker \chi \cong \im \chi \oplus \im \varphi\]
and
\[ \abs{H^{i-1}\paren{P}}\abs{H^{i-1}\paren{P'}}=\abs{H^{i-1}\paren{P} \oplus H^{i-1}\paren{P'}}=\abs{\im \chi}\abs{\im \varphi}\,.\]
A similar computation yields
\[ \abs{H^{i-1}\paren{P\cap P'}}=\abs{\im \chi}\abs{\im \delta}\,.\]

Combining these results gives
\begin{align*}
 \abs{H^{i-1}\paren{P \cap P'}}\abs{H^{i-1}\paren{P \cup P'}}&= \abs{\im \chi}\abs{\im \delta}\abs{ \im \varphi}\abs{ \ker \varphi}\\
 &\geq \abs{\im \chi}\abs{\im \varphi}\\ 
 &= \abs{H^{i-1}\paren{{P}}} \abs{H^{i-1}\paren{{P'}}}\,,
\end{align*}
 as desired.
 \end{proof}

\begin{Corollary}\label{cor:FKGinfinite}
    The measures $\mu^{\xi}_{r,p,q,i}$ and $\mu^{\overline{\xi}}_{r,p,q,i}$ are also positively associated, as are the limiting infinite volume PRCMs, if they exist.
    \end{Corollary}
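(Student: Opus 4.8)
The plan is to leverage Theorem~\ref{thm:FKGgeneral} together with the structural results established earlier (Proposition~\ref{prop:largeboundary} and Corollary~\ref{cor:boundarywelldefined}) so that positive association of the boundary-condition measures reduces to positive association of a free measure on a larger complex. The key observation is that positive association (the FKG lattice condition~\eqref{eq:holley}) is preserved under conditioning on increasing/decreasing cylinder events and under taking marginals, so I will express each measure with boundary conditions as a conditioned restriction of a free measure.

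First I would treat $\mu^{\xi}_{r,p,q,i}$. By Proposition~\ref{prop:largeboundary} I can replace $\xi$ by a finitely supported boundary condition $\xi_{r'}$ for a sufficiently large box $r'$, and then by Corollary~\ref{cor:boundarywelldefined} (with $\xi_2 = \emptyset$) this measure equals the restriction to $r$ of the free measure $\mu^{\mathbf{f}}_{r',p,q,i}$ conditioned on the event $A(\xi_{r'})$ that $P$ agrees with the prescribed plaquettes on $r' \setminus r$. Conditioning on $A(\xi_{r'})$ amounts to fixing the states of all plaquettes outside $r$, which is the intersection of increasing and decreasing events on the coordinates outside $r$; since Theorem~\ref{thm:FKGgeneral} gives the FKG lattice condition for $\mu^{\mathbf{f}}_{r',p,q,i}$, and this condition is inherited by any conditioning that fixes a subset of coordinates, the conditioned measure is positively associated on the remaining coordinates. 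Restricting (marginalizing) to $r$ then preserves positive association, since an increasing event on $r$ is an increasing event on $r'$. The same argument handles $\mu^{\overline{\xi}}_{r,p,q,i}$, using the second half of Proposition~\ref{prop:largeboundary} to replace $\overline{\xi}$ by a finitely supported wired approximation $\hat{\xi}_{r'}$ and the analogous statement in Corollary~\ref{cor:boundarywelldefined}.

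For the limiting infinite volume measures, I would argue that positive association passes to weak limits. If $\mu^{\xi}_{\Z^d,p,q,i}$ exists as the weak limit of the finite-volume measures $\mu^{\xi}_{\Lambda_n,p,q,i}$, each of which is positively associated by the finite-volume statement just proven, then for any two increasing cylinder events $A$ and $B$ (which depend on finitely many plaquettes) the inequality $\mu^{\xi}_{\Lambda_n}(A \cap B) \geq \mu^{\xi}_{\Lambda_n}(A)\mu^{\xi}_{\Lambda_n}(B)$ holds for all large $n$, and passing to the limit preserves the inequality because $A$, $B$, and $A \cap B$ are all cylinder events on which the measures converge. Since increasing events are determined by their cylinder approximations, this extends to all increasing events.

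The main obstacle I anticipate is justifying carefully that the FKG lattice condition is preserved under conditioning on the external plaquette states: one must check that fixing the coordinates outside $r$ still yields a \emph{strictly positive} measure on the remaining coordinates (as required by Holley's Theorem~\ref{thm:holley}) and that the lattice condition~\eqref{eq:holley} restricts correctly to the sublattice of configurations compatible with $A(\xi_{r'})$. This is essentially the standard fact that the FKG lattice condition is closed under conditioning on fixed coordinates, but it should be stated explicitly rather than invoked as folklore; once that is in hand, the reduction via Proposition~\ref{prop:largeboundary} and Corollary~\ref{cor:boundarywelldefined} is routine, and the limiting statement follows from the general principle that positive association is a closed condition under weak convergence on cylinder events.
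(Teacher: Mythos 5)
Your proposal is correct and follows essentially the same route as the paper: reduce $\mu^{\xi}_{r,p,q,i}$ and $\mu^{\overline{\xi}}_{r,p,q,i}$ to a conditioned restriction of a free measure on a larger cube via Proposition~\ref{prop:largeboundary} and Corollary~\ref{cor:boundarywelldefined}, observe that the lattice condition~\eqref{eq:holley} survives conditioning on fixed external plaquettes, and pass positive association to the weak limit through cylinder events (the paper simply cites Proposition 4.10 of~\cite{grimmett2006random} for this last step). Your extra care about strict positivity and the sublattice restriction fills in exactly what the paper dismisses as ``easy to see.''
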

\begin{proof}
Let $\mu_r$ be a measure of the form  $\mu^{\xi}_{r,p,q,i}$ or $\mu^{\overline{\xi}}_{r,p,q,i}.$ By Proposition~\ref{prop:largeboundary} and Corollary~\ref{cor:boundarywelldefined} for a sufficiently large cube $\Lambda\supset r$ we have that
$$\mu_r\stackrel{d}{=}\paren{\mu_{\Lambda} \| A} \Big|_{r}$$
where $A$ is the event that the states of all plaquettes in $\Lambda\setminus r$ agrees with those of $\xi.$ It is easy to see that Equation~\ref{eq:holley} for $\mu_\Lambda$ implies it for $\mu_r.$  

The positive association for infinite volume measures follows from that of the finite volume measures by Proposition 4.10 of~\cite{grimmett2006random}.
\end{proof}

\newpage
\bibliographystyle{alpha}
\bibliography{bibliography}

\end{document}